 \newtheorem{thm}{Theorem}[section]
 \newtheorem{lem}[thm]{Lemma}
 \numberwithin{equation}{section}
\begin{document}

\title[Perturbation method for Einstein-Dirac equations]
 {Perturbation method for particlelike solutions of Einstein-Dirac equations}

\author{Simona Rota Nodari}

\address{Ceremade (UMR CNRS 7534)
Universit\'e Paris-Dauphine\newline
Place Mar\'echal Lattre de Tassigny\newline
75775 Paris Cedex 16
France}

\email{rotanodari@ceremade.dauphine.fr}


\date{October 2, 2009}

\begin{abstract}
The aim of this work is to prove by a perturbation method the existence of solutions of the coupled Einstein-Dirac equations for a static, spherically symmetric system of two fermions in a singlet spinor state. We relate the 
solutions of our equations to those of the nonlinear Choquard equation and we show that the nondegenerate solution of Choquard’s equation generates solutions for Einstein-Dirac equations.
\end{abstract}

\maketitle

\section{Introduction}
In this paper, we study the coupled Einstein-Dirac equations for a static, spherically symmetric system of two fermions in a singlet spinor state. 
Using numerical methods, F. Finster, J. Smoller and ST. Yau found,  in \cite{Finsmoyau}, particlelike solutions; our goal is to give a rigorous proof of  their existence by a perturbation method
\footnote{After completing this work, we learned from professor Joel Smoller that Erik J. Bird had proved the existence of small solutions of the Einstein-Dirac equations in his doctoral thesis in 2005 \cite{Bird}. His method is quite different from ours: he uses Schauder's fixed point theorem.}.

The Einstein-Dirac equations take the form
\begin{eqnarray}\label{eq:einsteindirac1}
&&(D-m)\psi=0\\
\label{eq:einsteindirac2}
&&R^i_j-\frac{1}{2}R\delta^i_j=-8\pi T^i_j
\end{eqnarray}
where $D$ denotes the Dirac operator, $\psi$ is the wave function of a fermion of mass $m$, $R^i_j$ is the Ricci curvature tensor, $R$ indicates the scalar curvature and, finally, $T^i_j$ is the energy-momentum tensor of the Dirac particle.\\
In \cite{Finsmoyau}, Finster, Smoller and Yau work with the Dirac operator into a static, spherically symmetric space-time where the metric, in polar coordinates $(t,r,\vartheta,\varphi)$, is given by 
\begin{equation}\label{def:metric1}
g_{ij}=diag\left(\frac{1}{T^2},-\frac{1}{A},-r^2,-r^2\sin^2\vartheta\right)
\end{equation}
\begin{equation}\label{def:metric2}
g^{ij}=diag\left(T^2,-A,-\frac{1}{r^2},-\frac{1}{r^2\sin^2\vartheta}\right)
\end{equation}
with $A=A(r)$, $T=T(r)$ positive functions; so, the Dirac operator can be written as 
\begin{equation}\label{eq:Dirac}
D=i\gamma^t\partial_t+\gamma^r\left(i\partial_r+\frac{i}{r}\left(1-A^{-1/2}\right)-\frac{i}{2}\frac{T'}{T}\right)+i\gamma^{\vartheta}\partial_{\vartheta}+i\gamma^{\varphi}\partial_{\varphi}
\end{equation}
with
\begin{eqnarray}\label{def:Diracmatrice}
\gamma^t&=&T\bar{\gamma}^0\\
\gamma^r&=&\sqrt{A}\left(\bar{\gamma}^1\cos\vartheta+\bar{\gamma}^2\sin\vartheta\cos\varphi+\bar{\gamma}^3\sin\vartheta\sin\varphi\right)\\
\gamma^{\vartheta}&=&\frac{1}{r}\left(-\bar{\gamma}^1\sin\vartheta+\bar{\gamma}^2\cos\vartheta\cos\varphi+\bar{\gamma}^3\cos\vartheta\sin\varphi\right)\\
\gamma^{\varphi}&=&\frac{1}{r\sin\vartheta}\left(-\bar{\gamma}^2\sin\varphi+\bar{\gamma}^3\cos\varphi\right)
\end{eqnarray}
where $\bar{\gamma}^i$ are the Dirac matrices in Minkowski space (see \cite{Finsmoyau}).

Moreover, Finster, Smoller and Yau are looking for solutions taking the form 
\begin{equation}\label{def:wave}
\psi=e^{-i\omega t}r^{-1}T^{1/2}\left(\begin{array}{c}\Phi_1\left(\begin{array}{c}1\\0\end{array}\right)\\i\Phi_2\sigma^r\left(\begin{array}{c}1\\0\end{array}\right)\end{array}\right),
\end{equation}
where $\sigma^r=\left(\bar{\sigma}^1\cos\vartheta+\bar{\sigma}^2\sin\vartheta\cos\varphi+\bar{\sigma}^3\sin\vartheta\sin\varphi\right)$ is a linear combination of the Pauli matrices $\bar{\sigma}^i$ and $\Phi_1(r)$, $\Phi_2(r)$ are radial real functions.

We remind also that the energy-momentum tensor is obtained as the variation of the classical Dirac action 
$$
S=\int{\bar{\psi}(D-m)\psi\sqrt{|g|}\,d^4x}
$$
and takes the form
\begin{eqnarray*}
T^i_j&=&\frac{1}{r^{2}}diag\left(2\omega T^2|\Phi|^2,-2\omega T^2|\Phi|^2+4T\frac{1}{r}\Phi_1\Phi_2+2mT\left(\Phi_1^2-\Phi_2^2\right),\right.\\
&&\left.-2T\frac{1}{r}\Phi_1\Phi_2,-2T\frac{1}{r}\Phi_1\Phi_2\right)
\end{eqnarray*}
(see \cite{Finsmoyau} for more details).

In this case, the coupled Einstein-Dirac equations can be written as 
\begin{eqnarray}\label{eq:einsteindiraceq1}
\sqrt{A}\Phi_1'&=&\frac{1}{r}\Phi_1-(\omega T+m)\Phi_2\\
\label{eq:einsteindiraceq2}
\sqrt{A}\Phi_2'&=&(\omega T-m)\Phi_1-\frac{1}{r}\Phi_2\\
\label{eq:einsteindiraceq3}
rA'&=&1-A-16\pi\omega T^2\left(\Phi_1^2+\Phi_2^2\right)\\
\label{eq:einsteindiraceq4}
2rA\frac{T'}{T}&=&A-1-16\pi\omega T^2\left(\Phi_1^2+\Phi_2^2\right)+32\pi\frac{1}{r} T\Phi_1\Phi_2+\nonumber\\
&&+16\pi m T\left(\Phi_1^2-\Phi_2^2\right)
\end{eqnarray}
with the normalization condition
\begin{equation}\label{eq:normalcond}
\int_0^{\infty}{|\Phi|^2\frac{T}{\sqrt A}\,dr}=\frac{1}{4\pi}.
\end{equation}
In order that the metric be asymptotically Minkowskian, Finster, Smoller and Yau assume that
$$
\lim_{r\rightarrow \infty}T(r)=1.
$$
Finally, they also require that the solutions have finite (ADM) mass; namely
$$
\lim_{r\rightarrow \infty}\frac{r}{2}(1-A(r))<\infty.
$$

In this paper, we will prove the existence of solutions of (\ref{eq:einsteindirac1}-\ref{eq:einsteindirac2}) in the form (\ref{def:wave}) by a perturbation method.\\
In particular, we follow the idea described by Ounaies in \cite{ounaies} (see also \cite{Guan} for a rigorous existence proof of nonlinear Dirac solitons based on Ounaies' approach). Ounaies, by a perturbation parameter, relates the solutions of a nonlinear Dirac equation to those of nonlinear Schrödinger equation. Imitating the idea of Ounaies, we relate the solutions of ours equations to those of nonlinear Choquard's equation (see \cite{liebcho}, \cite{Lionscho} for more details on Choquard's equation) and we obtain the following result.
\begin{thm}\label{th:solution} Given $0<\omega< m$ such that $m-\omega$ is sufficiently small, there exists a non trivial solution of (\ref{eq:einsteindiraceq1}-\ref{eq:einsteindiraceq4}).
\end{thm}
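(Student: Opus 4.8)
The plan is to eliminate the metric coefficients $A$ and $T$ and the ``small'' spinor component $\Phi_{2}$ from the system, reducing \eqref{eq:einsteindiraceq1}--\eqref{eq:einsteindiraceq4} to a single nonlocal second-order equation for $\Phi_{1}$, and then, after a rescaling governed by the parameter $m-\omega$, to recognize this equation as a perturbation of the nonlinear Choquard equation, whose unique positive radial ground state is nondegenerate and therefore persists under the perturbation via the implicit function theorem. Concretely, \eqref{eq:einsteindiraceq3} rewrites as $(rA)'=1-16\pi\omega T^{2}|\Phi|^{2}$, so, imposing boundedness of $A$ at the origin,
\[
A(r)=1-\frac{16\pi\omega}{r}\int_{0}^{r}T(s)^{2}|\Phi(s)|^{2}\,ds,
\]
while \eqref{eq:einsteindiraceq4} is a linear first-order equation for $\log T$ which, together with $T(\infty)=1$, gives
\[
\log T(r)=-\int_{r}^{\infty}\frac{1}{2sA(s)}\Bigl[A-1-16\pi\omega T^{2}|\Phi|^{2}+\tfrac{32\pi}{s}T\Phi_{1}\Phi_{2}+16\pi mT(\Phi_{1}^{2}-\Phi_{2}^{2})\Bigr]ds .
\]
For $\Phi$ small these two identities form a contraction, hence determine $(A,T)$ as a smooth functional of $(\Phi_{1},\Phi_{2})$, with $A-1$ and $T-1$ quadratically small. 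Then \eqref{eq:einsteindiraceq1} gives $\Phi_{2}=(\omega T+m)^{-1}\bigl(r^{-1}\Phi_{1}-\sqrt{A}\,\Phi_{1}'\bigr)$, and substituting this together with the formulas for $A,T$ into \eqref{eq:einsteindiraceq2} produces a closed nonlocal second-order ODE for $\Phi_{1}$ alone.

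Next I would set $\varepsilon^{2}=m-\omega$ and, following Ounaies' philosophy, introduce
\[
x=\varepsilon r,\quad \Phi_{1}(r)=\varepsilon\,\phi(\varepsilon r),\quad \Phi_{2}(r)=\varepsilon^{2}\psi(\varepsilon r),\quad A=1+\varepsilon^{2}\mathcal{A}(\varepsilon r),\quad T=1+\varepsilon^{2}\mathcal{T}(\varepsilon r),
\]
which is exactly the balance dictated by \eqref{eq:einsteindiraceq1}--\eqref{eq:einsteindiraceq4} and by the integral formulas above: in the limit $\mathcal{A}$ and $\mathcal{T}$ are governed by $x\mathcal{A}'=-\mathcal{A}-16\pi m\phi^{2}$ and $2x\mathcal{T}'=\mathcal{A}$, i.e. by Newton's equation, so that after the substitution $\phi(x)=x\,u(x)$ (the factor $r^{-1}$ in \eqref{def:wave} being what turns the radial profile $\phi$ into a genuine three-dimensional function $u$) one gets $m\mathcal{T}(x)\to 2m^{2}\bigl(|x|^{-1}*u^{2}\bigr)(x)$. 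A direct computation then shows that the rescaled problem is an equation $G(u,\varepsilon)=0$ on a space of radial functions on $\mathbf{R}^{3}$ with
\[
G(u,0)=-\frac{1}{2m}\Delta u+u-2m^{2}\Bigl(\frac{1}{|x|}*u^{2}\Bigr)u ,
\]
which is, up to a fixed change of scale, the Choquard equation of \cite{liebcho,Lionscho}; every curvature correction, every $\Phi_{2}$-contribution and every term beyond the leading order of $\omega T-m$ is absorbed into $G(u,\varepsilon)-G(u,0)=o(1)$ as $\varepsilon\to 0$.

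For the persistence step, let $u_{0}>0$ be the ground state of the limiting equation; it is the unique positive radial solution (\cite{liebcho}) and it is nondegenerate, the kernel of the linearized operator $L_{+}$ on $\mathbf{R}^{3}$ being spanned by the translations $\partial_{x_{i}}u_{0}$. Since those are odd, $L_{+}$ has trivial kernel on the radial subspace and, being there a relatively compact perturbation of $-\frac{1}{2m}\Delta+1$, is an isomorphism; hence $D_{u}G(u_{0},0)$ is invertible on the radial spaces at hand. Provided $G$ is $C^{1}$ near $(u_{0},0)$, the implicit function theorem furnishes, for every sufficiently small $\varepsilon>0$, a radial solution $u_{\varepsilon}$ near $u_{0}$; undoing the rescaling yields a nontrivial solution $(\Phi_{1},\Phi_{2},A,T)$ of \eqref{eq:einsteindiraceq1}--\eqref{eq:einsteindiraceq4} with $0<\omega=m-\varepsilon^{2}<m$, which is the assertion. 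One also checks a posteriori that the metric is asymptotically Minkowskian and of finite ADM mass, since $u_{0}$, hence $\phi$ and $\Phi$, decays exponentially.

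The crux is the rescaling step: one must set up Banach spaces of radial functions — with exponential weights matched to the decay of $u_{0}$ and with the correct behaviour at $r=0$ — in which the nonlocal operators (the Riesz convolution and the integral formulas for $A$ and $T$) are well defined and in which $G$ depends $C^{1}$ on $(u,\varepsilon)$ \emph{uniformly down to} $\varepsilon=0$; controlling the $r^{-1}$ singularities in the Dirac system and the tails of the integrals defining $A$ and $T$ is where the real work lies. The nondegeneracy of the Choquard ground state, though indispensable, is quoted from the literature.
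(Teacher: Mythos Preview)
Your proposal follows the same strategy as the paper: rescale by powers of $m-\omega$, identify the Choquard equation as the formal limit, and apply the implicit function theorem using the radial nondegeneracy of the Choquard ground state. The implementation differs in one respect worth noting. You first eliminate $A$, $T$ and $\Phi_2$ (the first two by a contraction argument, the third algebraically) and reduce to a single scalar nonlocal equation $G(u,\varepsilon)=0$; the paper instead keeps the rescaled problem as a three-component system in $(\varphi,\chi,\tau)$---your $(\phi,\psi,\mathcal{T})$---eliminating only $A$ via the explicit integral \eqref{eq:defA}, and applies the IFT to an operator $D:\mathbb{R}\times X_\varphi\times X_\chi\times X_\tau\to Y_\varphi\times Y_\chi\times Y_\tau$. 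This avoids your preliminary fixed-point step for $(A,T)$, at the price of a larger linearization: the paper shows $D_{\varphi,\chi,\tau}(0,\phi_0)$ is invertible by writing it as an explicit isomorphism $W$ (built from the free Dirac operator restricted to a partial-wave subspace, Lemma~\ref{lemma:isoV}) plus a compact remainder $S$, and then checking injectivity via the same Choquard nondegeneracy you quote. Your scalar reduction makes the IFT step lighter but, as you yourself flag, concentrates the work in proving $C^1$-regularity of the composite nonlocal map $G$ uniformly to $\varepsilon=0$; the paper's concrete choice of spaces---$\varphi,\chi$ such that $\varphi/r$, $(\chi/r)\sigma^r(\begin{smallmatrix}1\\0\end{smallmatrix})$ lie in $H^1(\mathbb{R}^3)$, and $\tau$ with $\tau'\in L^1$ and $\tau(\infty)=0$---turns that verification into the routine Lemma~\ref{lemma:regularity}. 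Either route works; they are variants of one argument rather than genuinely different proofs.
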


\noindent In Section \ref{section:EinsteinDirac}, we solve the Einstein-Dirac equations by means of the
perturbation method suggested by Ounaies; in particular in the first subsection we describe a useful rescaling and some properties of the operators involved, whereas in the second subsection we prove the existence of solutions generated by the solution of the Choquard equation. 

\section{Perturbation method for Einstein-Dirac equations}\label{section:EinsteinDirac}
First of all, we observe that writing $T(r)=1+t(r)$ and using equation (\ref{eq:einsteindiraceq3}), the coupled Einstein-Dirac equations become
\begin{eqnarray}\label{eq:einsteindiraceq2.1}
\sqrt{A}\Phi_1'&=&\frac{1}{r}\Phi_1-(\omega +m)\Phi_2- \omega t \Phi_2\\
\label{eq:einsteindiraceq2.2}
\sqrt{A}\Phi_2'&=&(\omega -m)\Phi_1+\omega t \Phi_1-\frac{1}{r}\Phi_2\\
\label{eq:einsteindiraceq2.3}
2rAt'&=&(A-1)(1+t)-16\pi\omega (1+t)^3\left(\Phi_1^2+\Phi_2^2\right)+\nonumber\\
&&+32\pi\frac{1}{r} (1+t)^2\Phi_1\Phi_2+16\pi m (1+t)^2\left(\Phi_1^2-\Phi_2^2\right)
\end{eqnarray}
where 
\begin{equation}\label{eq:defA}
A(r)=1-\frac{16\pi\omega}{r}\int_0^{r}{(1+t(s))^2\left(\Phi_1(s)^2+\Phi_2(s)^2\right)\,ds}:=1-\frac{16\pi\omega}{r}Q(r).
\end{equation}
Furthermore, because we want $A(r)>0$, we have that the following condition must be satisfied
\begin{equation}\label{eq:condQ}
0\leq\frac{Q(r)}{r}<\frac{1}{16\pi\omega}
\end{equation}
for all $r\in(0,\infty)$.

Now, to find a solution of the equations (\ref{eq:einsteindiraceq2.1}-\ref{eq:einsteindiraceq2.3}), we exploit the idea used by Ounaies in \cite{ounaies}. In particular, we proceed as follow: in a first step we use a rescaling argument to transform (\ref{eq:einsteindiraceq2.1}-\ref{eq:einsteindiraceq2.3}) in a perturbed system of the form
\begin{equation}\label{eq:systemperturbed}
\left\{
\begin{array}{l}
\sqrt{A\left(\varepsilon,\varphi,\chi,\tau\right)} \frac{d}{dr}\varphi-\frac{1}{r}\varphi+2m\chi+K_1\left(\varepsilon,\varphi,\chi,\tau \right)=0\\[5pt]
\sqrt{A\left(\varepsilon,\varphi,\chi,\tau\right)} \frac{d}{dr}\chi+\frac{1}{r}\chi+\varphi-m\varphi\tau+K_2\left(\varepsilon,\varphi,\chi,\tau \right)=0\\[5pt]
A\left(\varepsilon,\varphi,\chi,\tau\right)\frac{d}{dr}\tau+\frac{8\pi m}{r^2}\int_0^r\varphi^2\,ds+K_3\left(\varepsilon,\varphi,\chi,\tau \right)=0
\end{array}
\right.
\end{equation}
where $\varphi,\chi,\tau:(0,\infty)\rightarrow\mathbb{R}$.\\
Second, we relate the solutions of (\ref{eq:systemperturbed}) to those of the nonlinear system 
\begin{equation}\label{eq:systemperturbed2}
\left\{
\begin{array}{l}
-\frac{d^2}{dr^2}\varphi+2m\varphi-16\pi m^3\left(\int_{0}^{\infty}\frac{\varphi^2}{\max(r,s)}\,ds\right)\varphi=0\\[5pt]
\chi(r)=\frac{1}{2m}\left(\frac{1}{r}\varphi-\frac{d}{dr}\varphi\right)\\[5pt]
\tau(r)=8\pi m \int_{0}^{\infty}\frac{\varphi^2}{\max(r,s)}\,ds.
\end{array}
\right.
\end{equation}
We remark that $\varphi$ is a solution of (\ref{eq:systemperturbed2}) if and only if $u(x)=\frac{\varphi(|x|)}{|x|}$ solves the nonlinear Choquard equation 
\begin{equation}\label{eq:choquard}
\begin{array}{cc}
-\triangle u+2m u-4m^3\left(\int_{\mathbb{R}^3}\frac{\left|u(y)\right|^2}{|x-y|}\,dy\right)u=0&\mbox{in}\ H^1\left(\mathbb{R}^3\right).
\end{array}
\end{equation}
To prove this fact it's enough to remind that for a radial function $\rho$, $$\triangle \rho =\frac{1}{r^2}\frac{d}{dr}\left(r^2\frac{d}{dr}\rho\right)$$
and $$\left(|\cdot|\star\rho\right)(x)=4\pi\left(\int_{0}^{\infty}\frac{s^2\rho(s)}{\max(r,s)}\,ds\right)$$
with $r=|x|$.\\
We observe also that if we write  
 \begin{equation*}
\left(\begin{array}{c}u(x)\\ v(x)\end{array}\right)=r^{-1}\left(\begin{array}{c}\varphi(r)\left(\begin{array}{c}1\\0\end{array}\right)\\i\chi(r)\sigma^r\left(\begin{array}{c}1\\0\end{array}\right)\end{array}\right)
\end{equation*}
with $r=|x|$, $(\varphi,\chi)$ is a solution of (\ref{eq:systemperturbed2}) if and only if $(u(x),v(x))$ solve
\begin{equation}\label{eq:choquardR3}
\begin{array}{cc}
-\triangle u+2m u-4m^3\left(\int_{\mathbb{R}^3}\frac{\left|u(y)\right|^2}{|x-y|}\,dy\right)u=0&v=\frac{-i\bar{\sigma}\nabla u}{2m}
\end{array}
\end{equation}
in $\mathbb{R}^3$ where $\bar{\sigma}\nabla=\sum_{i=1}^{3}\bar{\sigma}^i\partial_i$.\\
It's well known that Choquard's equation (\ref{eq:choquard}) has a unique radial, positive solution $u_0$ with $\int|u_0|^2=N$ for some $N>0$ given. Furthermore, $u_0$ is infinitely differentiable and goes to zero at infinity; more precisely there exist some positive constants $C_{\delta,\eta}$ such that $\left|D^{\eta}\left(u_0\right)\right|\leq C_{\delta,\eta}\exp(-\delta|x|)$ for $x\in\mathbb{R}^3$. At last, $u_0\in H^1(\mathbb{R}^3)$ is a radial nondegenerate solution; by this we mean that the linearization of (\ref{eq:choquard}) around $u_0$ has a trivial nullspace in $L^2_{r}(\mathbb{R}^3)$. In particular, the linear operator $\mathcal{L}$ given by
\begin{equation*}
\mathcal{L}\xi=-\triangle \xi+2m \xi-4m^3\left(\int_{\mathbb{R}^3}\frac{\left|u_0(y)\right|^2}{|x-y|}\,dy\right)\xi-8m^3\left(\int_{\mathbb{R}^3}\frac{\xi(y)u_0(y)}{|x-y|}\,dy\right)u_0
\end{equation*}
satisfies $\ker\mathcal{ L}=\{0\}$ when $\mathcal{L}$ is restricted to $L^2_{r}(\mathbb{R}^3)$
(see \cite{liebcho}, \cite{Lionscho}, \cite{Lenzmann} for more details).

The main idea is that the solutions of (\ref{eq:systemperturbed}) are the zeros of a $\mathcal{C}^1$ operator $D:\mathbb{R}\times X_{\varphi}\times X_{\chi}\times X_{\tau}\rightarrow Y_{\varphi}\times Y_{\chi}\times Y_{\tau}$. If we denote by $D_{\varphi,\chi,\tau}(\varepsilon,\varphi,\chi,\tau)$ the derivative of $D(\varepsilon,\cdot,\cdot,\cdot)$, by $(\varphi_0,\chi_0,\tau_0)$ the ground state solution of (\ref{eq:systemperturbed2}) and we observe that $D_{\varphi,\chi,\tau}(\varepsilon,\varphi_0,\chi_0,\tau_0)$ is an isomorphism, the application of the implicit function theorem (see \cite{RenRog}) yields the following result, which is equivalent to theorem \ref{th:solution}. 
\begin{thm}\label{th:principalth} Let $(\varphi_0,\chi_0,\tau_0)$ be the ground state solution of (\ref{eq:systemperturbed2}), then there exists $\delta>0$ and a function $\eta\in\mathcal{C}((0,\delta),X_{\varphi}\times X_{\chi}\times X_{\tau})$ such that $\eta(0)=(\varphi_0,\chi_0,\tau_0)$ and $(\varepsilon,\eta(\varepsilon))$ is a solution of (\ref{eq:systemperturbed}), for $0\leq \varepsilon <\delta$.
\end{thm}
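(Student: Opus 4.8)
The plan is to obtain Theorem~\ref{th:principalth} as a direct application of the implicit function theorem to the $\mathcal{C}^1$ map $D$ at the point $(0,\varphi_0,\chi_0,\tau_0)$. The first step is to fix the functional setting: I would take $X_{\varphi}, X_{\chi}, X_{\tau}$ and the target spaces $Y_{\varphi}, Y_{\chi}, Y_{\tau}$ to be spaces of radial functions on $(0,\infty)$ carrying an exponential weight — matching the exponential decay of the Choquard ground state $u_0$ and its derivatives — together with enough Sobolev regularity and the vanishing conditions at $r=0$ (such as $\varphi(0)=0$) needed to absorb the singular coefficients $1/r$ and $1/r^{2}$. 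One must then check that $D$ is well defined and of class $\mathcal{C}^1$ on $\mathbb{R}\times X_{\varphi}\times X_{\chi}\times X_{\tau}$; the two nonobvious points are the smoothness of the nonlocal terms $\int_0^{\infty}\varphi^2/\max(r,s)\,ds$ and $\frac{1}{r^2}\int_0^r\varphi^2\,ds$, which follows from Hardy--Littlewood--Sobolev-type estimates helped by the exponential weight, and the smoothness of the terms containing $\sqrt{A(\varepsilon,\varphi,\chi,\tau)}$ and $A^{-1}$: for $\varepsilon$ small and $(\varphi,\chi,\tau)$ near $(\varphi_0,\chi_0,\tau_0)$ the quantity $Q(r)/r$ stays away from $1/(16\pi\omega)$, so $A$ stays close to $1$ and $\sqrt{\,\cdot\,}$, $(\cdot)^{-1}$ act as smooth superposition operators. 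Finally, by the way the rescaling is designed, system \eqref{eq:systemperturbed} at $\varepsilon=0$ is \eqref{eq:systemperturbed2}, so $D(0,\varphi_0,\chi_0,\tau_0)=0$.

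Next I would compute the partial differential $L:=D_{\varphi,\chi,\tau}(0,\varphi_0,\chi_0,\tau_0):X_{\varphi}\times X_{\chi}\times X_{\tau}\to Y_{\varphi}\times Y_{\chi}\times Y_{\tau}$, exploiting the triangular structure of \eqref{eq:systemperturbed2}. Linearizing the third equation, $\delta\tau$ is an explicit bounded integral operator applied to $\delta\varphi$ (the linearized Newtonian potential, $\delta\tau(r)=16\pi m\int_0^{\infty}\varphi_0\,\delta\varphi/\max(r,s)\,ds$); linearizing the first, $\delta\chi=\frac{1}{2m}\bigl(\frac{1}{r}\delta\varphi-(\delta\varphi)'\bigr)$; substituting both into the linearized second equation produces precisely the radial form of $\mathcal{L}$, that is $-(\delta\varphi)''+2m\,\delta\varphi-16\pi m^{3}\bigl(\int_0^{\infty}\varphi_0^{2}/\max(r,s)\,ds\bigr)\delta\varphi-32\pi m^{3}\varphi_0\int_0^{\infty}\varphi_0\,\delta\varphi/\max(r,s)\,ds=0$, i.e. $\mathcal{L}\bigl(\delta\varphi(|\cdot|)/|\cdot|\bigr)=0$. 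Hence $L(\delta\varphi,\delta\chi,\delta\tau)=0$ forces $\delta\varphi$ into $\ker\mathcal{L}$.

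The core of the argument is that $L$ is an isomorphism. Injectivity follows from the previous paragraph together with the nondegeneracy assumption $\ker\mathcal{L}=\{0\}$ on $L^2_r(\mathbb{R}^3)$: one checks that the $\delta\varphi$ obtained there, a priori only in $X_{\varphi}$, indeed belongs to the radial $L^2$-class (here the Dirichlet condition at $0$ and elliptic regularity are used), so $\delta\varphi=0$, and then $\delta\chi=\delta\tau=0$. For surjectivity I would split $L=L_0+K$, where $L_0$ retains only $-d^{2}/dr^{2}+2m$ in the first component and the principal, potential-free, non-integral parts of the other two; $L_0$ is an isomorphism between the chosen spaces by elementary half-line ODE theory, while $K$ collects the potential terms $\tau_0\,\delta\varphi$, $\varphi_0\,\delta\tau$ and the integral term $\frac{1}{r^{2}}\int_0^r\varphi_0\,\delta\varphi\,ds$, all compact from $X_{\varphi}\times X_{\chi}\times X_{\tau}$ to $Y_{\varphi}\times Y_{\chi}\times Y_{\tau}$ since their coefficients or kernels decay (exponentially for those involving $\varphi_0$, and $\tau_0\to 0$ at infinity) while the source spaces control an extra derivative. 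Thus $L$ is Fredholm of index zero, and injectivity promotes it to a bijection.

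With $L$ invertible, the implicit function theorem (\cite{RenRog}) yields $\delta>0$ and a map $\eta\colon[0,\delta)\to X_{\varphi}\times X_{\chi}\times X_{\tau}$, in fact of class $\mathcal{C}^1$, with $\eta(0)=(\varphi_0,\chi_0,\tau_0)$ and $D(\varepsilon,\eta(\varepsilon))=0$ for $0\le\varepsilon<\delta$; in particular $\eta$ is continuous, which is the statement of Theorem~\ref{th:principalth}. Shrinking $\delta$ if needed, continuity of $\eta$ keeps \eqref{eq:condQ} valid along the branch, so $A(\varepsilon,\eta(\varepsilon))>0$ and each $(\varepsilon,\eta(\varepsilon))$ is a genuine solution of \eqref{eq:systemperturbed}; undoing the rescaling then produces the nontrivial solutions of \eqref{eq:einsteindiraceq1}--\eqref{eq:einsteindiraceq4} asserted in Theorem~\ref{th:solution}. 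I expect the main obstacle to be the choice of function spaces made in the first and third steps: they must be coarse enough that the singular coefficients $1/r$, $1/r^{2}$ and the nonlinearity $\sqrt{A}$ render $D$ of class $\mathcal{C}^1$, yet fine enough — via the exponential weights — that the zeroth-order and nonlocal parts of $L$ are genuinely compact, this compactness being exactly what converts the scalar nondegeneracy of $u_0$ into invertibility of the full linearized operator.
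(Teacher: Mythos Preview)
Your overall strategy matches the paper's: implicit function theorem, injectivity of the linearization reduced to $\ker\mathcal{L}=\{0\}$ on $L^2_r(\mathbb{R}^3)$, and surjectivity via a Fredholm argument (isomorphism plus compact). The reduction you sketch, eliminating $\delta\chi$ and $\delta\tau$ to land exactly on the radial $\mathcal{L}$, is precisely what the paper does in \eqref{eq:systemderiv}--\eqref{eq:systemderiv3d1}.

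Where you and the paper diverge is in the concrete choices. First, the function spaces: the paper does \emph{not} use exponential weights. It takes $X_\varphi,X_\chi$ to be the spaces of radial profiles with $\varphi(|x|)/|x|\in H^1(\mathbb{R}^3)$ (so Hardy's inequality handles the $1/r$ and $1/r^2$ coefficients), $X_\tau=\{\tau:\tau(\infty)=0,\ \tau'\in L^1\}$, and $Y_\varphi=Y_\chi=L^2(\mathbb{R}^3)$, $Y_\tau=L^1$. Compactness of the ``bad'' pieces then comes from the compact embedding $H^1_r\hookrightarrow L^q$ ($2<q<6$) and the decay $\tau_0(r)\to 0$, with no weights needed. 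Second, the splitting: the paper writes $D_{\varphi,\chi,\tau}(0,\phi_0)=W+S$ where $W$ \emph{keeps} the coupling $-m\frac{\varphi_0}{r}l$ in the second row and has $\frac{d}{dr}l$ alone in the third, so $W$ is lower triangular; its invertibility is obtained not by half-line ODE theory but by recognising the $(h,k)$ block as the restriction of the free Dirac-type operator $\overline{V}(u,v)=(i\bar\sigma\nabla u+2mv,\,-i\bar\sigma\nabla v+u)$ to a single partial-wave subspace (Lemma~\ref{lemma:isoV}, via \cite{ounaies,thaller}). Only $S(h)=(0,\,-m\tfrac{h}{r}\tau_0,\,\tfrac{16\pi m}{r^2}\int_0^r\varphi_0 h)$ is shown compact. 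Your alternative --- weighted spaces and a scalar second-order $L_0$ --- would also work, but the paper's route avoids weights entirely and leverages the Dirac partial-wave structure already available from Ounaies' paper.
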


\subsection{Rescaling}
In this subsection we are going to introduce the new variable $(\varphi,\chi,\tau)$ such that $\Phi_1(r)=\alpha\varphi(\lambda r)$, $\Phi_2(r)=\beta\chi(\lambda r)$ and $t(r)=\gamma\tau(\lambda r)$, where $\Phi_1,\Phi_2, t$ satisfy  (\ref{eq:einsteindiraceq2.1}-\ref{eq:einsteindiraceq2.3}) and $\alpha,\beta,\gamma,\lambda>0$ are constants to be chosen later.

Using the explicit expressions of $A$, given in (\ref{eq:defA}), we have
\begin{eqnarray}\label{eq:Aalphabetagamma}
A(\Phi_1,\Phi_2,t)&=&1-\frac{16\pi\omega\alpha^2}{r}\int_0^{r}{(1+\gamma \tau)^2\left(\varphi^2+\left(\frac{\beta}{\alpha}\chi\right)^2\right)\,ds}\nonumber\\
&:=&A_{\alpha,\beta,\gamma}(\varphi,\chi,\tau)
\end{eqnarray}
It's now clear that if $\Phi_1,\Phi_2,t$ satisfy (\ref{eq:einsteindiraceq2.1}-\ref{eq:einsteindiraceq2.3}), then $\varphi,\chi,\tau$ satisfy the system
\begin{equation}\label{eq:systemalphabeta}
\left\{
\begin{array}{l}
\sqrt{A_{\alpha,\beta,\gamma}}\frac{\alpha\lambda}{\beta}\frac{d}{dr}\varphi-\frac{\alpha\lambda}{\beta}\frac{1}{r}\varphi+(m+\omega)\chi+\omega \gamma\tau\chi=0\\[5pt]
\sqrt{A_{\alpha,\beta,\gamma}}\frac{d}{dr}\chi+\frac{1}{r}\chi+\frac{\alpha}{\beta\lambda}(m-\omega)\varphi-\frac{\alpha\gamma}{\beta\lambda}\omega\tau\varphi=0\\[5pt]
2A_{\alpha,\beta,\gamma}r\frac{d}{dr}\tau+K_{\alpha,\beta,\gamma}=0
\end{array}
\right.
\end{equation}
with
\begin{eqnarray*}
&&K_{\alpha,\beta,\gamma}(\varphi,\chi,\tau)=\frac{16\pi\omega}{r}\frac{\alpha^2}{\gamma}\left(\int_0^{r}{(1+\gamma \tau)^2\left(\varphi^2+\left(\frac{\beta}{\alpha}\chi\right)^2\right)\,ds}\right)(1+\gamma \tau)+\\
&&+16\pi\omega\frac{\alpha^2}{\gamma} (1+\gamma\tau)^3\left(\varphi^2+\left(\frac{\beta}{\alpha}\chi\right)^2\right)-32\pi\frac{\lambda\alpha\beta}{\gamma}\frac{1}{r} (1+\gamma\tau)^2\varphi\chi+\\
&&-16\pi m \frac{\alpha^2}{\gamma} (1+\gamma\tau)^2\left(\varphi^2-\left(\frac{\beta}{\alpha}\chi\right)^2\right).
\end{eqnarray*}

By adding the conditions $\frac{\alpha}{\beta\lambda}(m-\omega)=1$, $\frac{\alpha\lambda}{\beta}=1$, $\frac{\alpha\gamma}{\beta\lambda}=1$, $\frac{\alpha^2}{\gamma}=1$ and $m-\omega\geq 0$, we obtain $\alpha=(m-\omega)^{1/2}$, $\lambda=(m-\omega)^{1/2}$, $\beta=m-\omega$ and $\gamma=m-\omega$.

Denoting $\varepsilon=m-\omega$, (\ref{eq:systemalphabeta}) is equivalent to 
\begin{equation}\label{eq:systemepsilon}
\left\{
\begin{array}{l}
\sqrt{A\left(\varepsilon,\varphi,\chi,\tau\right)} \frac{d}{dr}\varphi-\frac{1}{r}\varphi+2m\chi+K_1\left(\varepsilon,\varphi,\chi,\tau \right)=0\\[5pt]
\sqrt{A\left(\varepsilon,\varphi,\chi,\tau\right)} \frac{d}{dr}\chi+\frac{1}{r}\chi+\varphi-m\varphi\tau+K_2\left(\varepsilon,\varphi,\chi,\tau \right)=0\\[5pt]
A\left(\varepsilon,\varphi,\chi,\tau\right)\frac{d}{dr}\tau+\frac{8\pi m}{r^2}\int_0^r\varphi^2\,ds+K_3\left(\varepsilon,\varphi,\chi,\tau \right)=0
\end{array}
\right.
\end{equation}
where $A\left(\varepsilon,\varphi,\chi,\tau\right)$, $K_1\left(\varepsilon,\varphi,\chi,\tau \right)$, $K_2\left(\varepsilon,\varphi,\chi,\tau \right)$ and $K_3\left(\varepsilon,\varphi,\chi,\tau \right)$ are defined by
\begin{eqnarray}\label{eq:Aepsilon}
&&A\left(\varepsilon,\varphi,\chi,\tau\right)=1-\frac{16\pi(m-\varepsilon)\varepsilon}{r}\int_0^{r}{(1+\varepsilon \tau)^2\left(\varphi^2+\varepsilon\chi^2\right)\,ds};\\
\label{eq:K1}
&&K_1\left(\varepsilon,\varphi,\chi,\tau \right)=-\varepsilon\chi+\varepsilon(m-\varepsilon)\tau\chi;\\
\label{eq:K2}
&&K_2\left(\varepsilon,\varphi,\chi,\tau \right)=\varepsilon\tau\varphi;
\end{eqnarray}
and
\begin{eqnarray}\label{eq:K3}
K_3\left(\varepsilon,\varphi,\chi,\tau \right)&=&\frac{8\pi m\varepsilon}{r^2}\int_0^r\chi^2\,ds+\frac{16\pi m\varepsilon}{r^2}\int_0^r\tau\left(\varphi^2+\varepsilon\chi^2\right)\,ds+\nonumber\\
&&+\frac{8\pi m\varepsilon^2}{r^2}\int_0^r\tau^2\left(\varphi^2+\varepsilon\chi^2\right)\,ds+\nonumber\\
&&-\frac{8\pi\varepsilon}{r^2}\left(\int_0^r(1+\varepsilon\tau)^2\left(\varphi^2+\varepsilon\chi^2\right)\,ds\right)\tau+\nonumber\\
&&+\frac{8\pi( m-\varepsilon)\varepsilon}{r^2}\left(\int_0^r(1+\varepsilon\tau)^2\left(\varphi^2+\varepsilon\chi^2\right)\,ds\right)\tau+\nonumber\\
&&+16\pi m\varepsilon\frac{\chi^2}{r}+8\pi m\varepsilon(3\tau+3\varepsilon\tau^2+\varepsilon^2\tau^3)\frac{\left(\varphi^2+\varepsilon\chi^2\right)}{r}+\nonumber\\
&&-8\pi \varepsilon(1+\varepsilon\tau)^3\frac{\left(\varphi^2+\varepsilon\chi^2\right)}{r}-16\pi \varepsilon(1+\varepsilon\tau)^2\frac{\varphi\chi}{r^2}+\nonumber\\
&&-8\pi m\varepsilon(2\tau+\varepsilon\tau^2)\frac{\left(\varphi^2-\varepsilon\chi^2\right)}{r}.
\end{eqnarray}

For $\varepsilon=0$, (\ref{eq:systemepsilon}) becomes
\begin{equation}\label{eq:systemepsilonzero}
\left\{
\begin{array}{l}
\frac{d}{dr}\varphi-\frac{1}{r}\varphi+2m\chi=0\\[5pt]
\frac{d}{dr}\chi+\frac{1}{r}\chi+\varphi-m\varphi\tau=0\\[5pt]
\frac{d}{dr}\tau+\frac{8\pi m}{r^2}\int_0^r\varphi^2\,ds=0
\end{array}
\right.
\end{equation}
that is equivalent to
\begin{equation}\label{eq:systemepsilonzero2}
\left\{
\begin{array}{l}
-\frac{d^2}{dr^2}\varphi+2m\varphi-16\pi m^3\left(\int_{0}^{\infty}\frac{\varphi^2}{\max(r,s)}\,ds\right)\varphi=0\\[5pt]
\chi(r)=\frac{1}{2m}\left(\frac{1}{r}\varphi-\frac{d}{dr}\varphi\right)\\[5pt]
\tau(r)=8\pi m \int_{0}^{\infty}\frac{\varphi^2}{\max(r,s)}\,ds.
\end{array}
\right.
\end{equation}
Then, we denote by $(\varphi_0,\chi_0,\tau_0)$ a solution of (\ref{eq:systemepsilonzero2}); in particular 
\begin{eqnarray*}
\chi_0(r)&=&-\frac{r}{2m}\frac{d}{dr}\left(\frac{\varphi_0}{r}\right)\\
\tau_0(r)&=&8\pi m \int_{0}^{\infty}\frac{\varphi_0^2}{\max(r,s)}\,ds.
\end{eqnarray*}

Now, to obtain a solution of  (\ref{eq:systemepsilon}) from $(\varphi_0,\chi_0,\tau_0)$, we define the operators  $L_1:\mathbb{R}\times X_{\varphi}\times X_{\chi}\times X_{\tau}\rightarrow Y_\varphi$, $L_2:\mathbb{R}\times X_{\varphi}\times X_{\chi}\times X_{\tau}\rightarrow Y_\chi$, $L_3:\mathbb{R}\times X_{\varphi}\times X_{\chi}\times X_{\tau}\rightarrow Y_\tau$ and $D:\mathbb{R}\times X_{\varphi}\times X_{\chi}\times X_{\tau}\rightarrow Y_{\varphi}\times Y_{\chi}\times Y_{\tau}$ by 
\begin{eqnarray*}
L_1(\varepsilon,\varphi,\chi,\tau)&=&\sqrt{A\left(\varepsilon,\varphi,\chi,\tau\right)}\frac{1}{r} \frac{d}{dr}\varphi-\frac{\varphi}{r^2}+2m\frac{\chi}{r}+\frac{1}{r}K_1\left(\varepsilon,\varphi,\chi,\tau \right)\\
L_2(\varepsilon,\varphi,\chi,\tau)&=&\sqrt{A\left(\varepsilon,\varphi,\chi,\tau\right)}\frac{1}{r} \frac{d}{dr}\chi+\frac{\chi}{r^2}+\frac{\varphi}{r}-m\frac{\varphi}{r}\tau+\frac{1}{r}K_2\left(\varepsilon,\varphi,\chi,\tau \right)\\
L_3(\varepsilon,\varphi,\chi,\tau)&=&A\left(\varepsilon,\varphi,\chi,\tau\right)\frac{d}{dr}\tau+\frac{8\pi m}{r^2}\int_0^r\varphi^2\,ds+K_3\left(\varepsilon,\varphi,\chi,\tau \right)
\end{eqnarray*}
and
\begin{equation*}
D(\varepsilon,\varphi,\chi,\tau)=(L_1(\varepsilon,\varphi,\chi,\tau),L_2(\varepsilon,\varphi,\chi,\tau),L_3(\varepsilon,\varphi,\chi,\tau))
\end{equation*}
where 
\begin{eqnarray*}
X_\varphi&=&\left\{\varphi:(0,\infty)\rightarrow\mathbb{R}\left|\frac{\varphi(|x|)}{|x|}\left(\begin{array}{c}1\\0\end{array}\right)\in H^1\left(\mathbb{R}^3,\mathbb{R}^2\right)\right.\right\}\\
X_\chi&=&\left\{\chi:(0,\infty)\rightarrow\mathbb{R}\left|\frac{\chi(|x|)}{|x|}\sigma^r\left(\begin{array}{c}1\\0\end{array}\right)\in H^1\left(\mathbb{R}^3,\mathbb{C}^2\right)\right.\right\}\\
X_\tau&=&\left\{\tau:(0,\infty)\rightarrow\mathbb{R}\left|\lim_{r\rightarrow\infty}\tau(r)\rightarrow 0, \frac{d}{dr}\tau\in L^1((0,\infty),dr)\right.\right\}\\
Y_\varphi&=&Y_\chi=L^2\left(\mathbb{R}^3\right)\\
Y_\tau&=&L^1((0,\infty),dr).
\end{eqnarray*}
Furthermore we define the following norms:
\begin{eqnarray*}
\|\varphi\|_{X_\varphi}&=&\left\|\frac{\varphi(|x|)}{|x|}\right\|_{H^1\left(\mathbb{R}^3\right)},\\
\|\chi\|_{X_\chi}&=&\left\|\frac{\chi(|x|)}{|x|}\sigma^r\left(\begin{array}{c}1\\0\end{array}\right)\right\|_{H^1\left(\mathbb{R}^3\right)},\\
\|\tau\|_{X_\tau}&=&\left\|\frac{d}{dr}\tau\right\|_{L^1((0,\infty),dr)}.\\
\end{eqnarray*}

It's well known that
$$
\begin{array}{ll}
H^1\left(\mathbb{R}^3\right)\hookrightarrow L^q\left(\mathbb{R}^3\right)& 2\leq q \leq 6\\[5pt]
X_\tau \hookrightarrow L^\infty\left((0,\infty),dr\right).\\[5pt]
\end{array}
$$
Moreover, using Hardy's inequality
$$
\int_{\mathbb{R}^3}{\frac{|f|^2}{|x|^2}\,dx}\leq 4\int_{\mathbb{R}^3}{|\nabla f|^2\,dx},
$$
we get the following properties:
\begin{equation}\label{eq:condsfunct2}
\begin{array}{c}
\rho \in H^1\left((0,\infty),dr\right)\hookrightarrow L^\infty\left((0,\infty),dr\right) \\[5pt]
\frac{\rho}{r}\in L^2\left((0,\infty),dr\right).
\end{array}
\end{equation}
$ \forall \rho \in X_\varphi, \forall \rho\in X_\chi$.

Since the operator $A(\varepsilon, \varphi, \chi, \tau)$ must be strictly positive, we consider $B_\varphi$, $B_\chi$, $B_\tau$, defined as the balls of the spaces $X_\varphi,X_\chi,X_\tau$, and $\varepsilon_1,\varepsilon_2$, depending on $m$ and on the radius of $B_\varphi, B_\chi, B_\tau$, such that 
$$
1-\frac{16\pi(m-\varepsilon)\varepsilon}{r}\int_0^{r}{(1+\varepsilon \tau)^2\left(\varphi^2+\varepsilon\chi^2\right)\,ds}\geq\delta>0
$$ 
for all $(\varepsilon,\varphi,\chi,\tau)\in (-\varepsilon_1,\varepsilon_2)\times B_{\varphi}\times B_{\chi}\times B_{\tau}$. The existence of $\varepsilon_1,\varepsilon_2$ is assured by the fact that $\varphi,\chi,\tau$ are bounded; in particular, if $\varepsilon\geq0$,
\begin{eqnarray*}
&&1-\frac{16\pi(m-\varepsilon)\varepsilon}{r}\int_0^{r}{(1+\varepsilon \tau)^2\left(\varphi^2+\varepsilon\chi^2\right)\,ds}\geq\\
&&\geq1-20m\varepsilon\left\|\varphi\right\|^2_{X_\varphi}-8m\varepsilon^2\left(5\left\|\tau\right\|_{X_\tau}\left\|\varphi\right\|^2_{X_\varphi}+\left\|\chi\right\|^2_{X_\chi}\right)+\\
&&-4m\varepsilon^3\left\|\tau\right\|_{X_\tau}\left(5\left\|\tau\right\|_{X_\tau}\left\|\varphi\right\|^2_{X_\varphi}+4\left\|\chi\right\|^2_{X_\chi}\right)-8m\varepsilon^4\left\|\tau\right\|_{X_\tau}^2\left\|\chi\right\|^2_{X_\chi},\\
\end{eqnarray*}
then there exists $\varepsilon_2>0$ such that $A(\varepsilon, \varphi, \chi, \tau)>0$ for all $\varepsilon\in [0,\varepsilon_2)$. In the same way, if $\varepsilon <0$,
\begin{eqnarray*}
&&1-\frac{16\pi(m-\varepsilon)\varepsilon}{r}\int_0^{r}{(1+\varepsilon \tau)^2\left(\varphi^2+\varepsilon\chi^2\right)\,ds}\geq\\
&&\geq1-8m\varepsilon^2\left\|\chi\right\|^2_{X_\chi}-8m|\varepsilon|^3\left\|\chi\right\|^2_{X_\chi}\left(1+2\left\|\tau\right\|_{X_\tau}\right)+\\
&&-8m\varepsilon^4\left\|\chi\right\|^2_{X_\chi}\left\|\tau\right\|_{X_\tau}\left(2+1\left\|\tau\right\|_{X_\tau}\right)-8m|\varepsilon|^5\left\|\tau\right\|_{X_\tau}^2\left\|\chi\right\|^2_{X_\chi},\\
\end{eqnarray*}
then there exists $\varepsilon_1>0$ such that $A(\varepsilon, \varphi, \chi, \tau)>0$ for all $\varepsilon\in (-\varepsilon_1,0)$.
  
\begin{lem}\label{lemma:regularity} The operators $L_1,L_2\in\mathcal{C}^1\left( (-\varepsilon_1,\varepsilon_2)\times B_{\varphi}\times B_{\chi}\times B_{\tau},Y_\varphi\right)$ and $L_3\in\mathcal{C}^1\left( (-\varepsilon_1,\varepsilon_2)\times B_{\varphi}\times B_{\chi}\times B_{\tau},Y_\tau\right)$.
\end{lem}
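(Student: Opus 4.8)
The plan is to exhibit each of $L_1$, $L_2$, $L_3$ as a finite sum of terms, each obtained by applying continuous (bi)linear operations --- pointwise multiplication, multiplication by a bounded function, composition --- to a handful of elementary maps that are $\mathcal{C}^1$ (in fact $\mathcal{C}^\infty$); since sums and compositions with continuous multilinear maps preserve the $\mathcal{C}^1$ class, the lemma follows. The real content is twofold: (i) to check that each elementary piece takes values in the space it is meant to, namely $L^2(\mathbb{R}^3)$ for the pieces of $L_1$, $L_2$ and $L^1((0,\infty),dr)$ for those of $L_3$; and (ii) to verify the regularity of each piece. Point (ii) is essentially automatic once (i) is settled.

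I would organise the elementary pieces into three families. The \emph{linear} ones are $\varphi\mapsto\frac1r\frac{d}{dr}\varphi$, $\varphi\mapsto\frac{\varphi}{r^2}$, $\varphi\mapsto\frac{\varphi}{r}$ and their counterparts on $X_\chi$: writing $u=\varphi(|\cdot|)/|\cdot|\in H^1(\mathbb{R}^3)$ one has $\frac1r\varphi'=u'+\frac ur$, $\frac{\varphi}{r^2}=\frac ur$ and $\frac\varphi r=u$, so Hardy's inequality and the properties recorded in (\ref{eq:condsfunct2}) make these bounded linear maps into $L^2(\mathbb{R}^3)$, hence $\mathcal{C}^\infty$. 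The \emph{bilinear} ones are pointwise products: a product of two functions of $H^1(\mathbb{R}^3)$ lies in $L^2(\mathbb{R}^3)$ (by $H^1(\mathbb{R}^3)\hookrightarrow L^4(\mathbb{R}^3)$) and the product map is continuous there; and multiplication by an $L^\infty$ function is continuous bilinear on $L^2(\mathbb{R}^3)$ and on $L^1((0,\infty),dr)$, which disposes of every occurrence of the factor $\tau$ (bounded since $X_\tau\hookrightarrow L^\infty$) and of the factor $\sqrt A$, once $\sqrt A\in L^\infty$ is known. The \emph{averaging} ones are $g\mapsto\frac1r\int_0^r g\,ds$ and $g\mapsto\frac1{r^2}\int_0^r g\,ds$, applied to nonnegative products $g=\varphi^2,\varepsilon\chi^2,\tau(\varphi^2+\varepsilon\chi^2),\dots$: using $s^2\le rs$ on $[0,r]$ one bounds $\frac1r\int_0^r\varphi^2\,ds\le\int_0^\infty(1+s^2)u^2\,ds$, finite and controlled by $\|\varphi\|_{X_\varphi}^2$ via Hardy, so the first map is bounded quadratic into $L^\infty((0,\infty),dr)$; and Fubini gives $\|\frac1{r^2}\int_0^\cdot g\,ds\|_{L^1(0,\infty)}=\int_0^\infty g(s)/s\,ds$, finite by the same kind of estimate, so the second is bounded quadratic into $L^1((0,\infty),dr)$. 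Being bounded and (multi)linear, all these maps are $\mathcal{C}^\infty$.

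With this toolkit the assembly is routine. The function $A(\varepsilon,\varphi,\chi,\tau)$ is a polynomial in $\varepsilon$ whose coefficients are continuous multilinear forms in $(\varphi,\chi,\tau)$ composed with the averaging maps above, so $(\varepsilon,\varphi,\chi,\tau)\mapsto A$ is $\mathcal{C}^\infty$ into $L^\infty((0,\infty),dr)$, and on $(-\varepsilon_1,\varepsilon_2)\times B_\varphi\times B_\chi\times B_\tau$ it takes values in the open set $\{f\in L^\infty: f\ge\delta\}$ by the choice of $\varepsilon_1,\varepsilon_2$. On that open set the superposition operator $f\mapsto\sqrt f$ is $\mathcal{C}^1$: its candidate derivative $h\mapsto h/(2\sqrt f)$ is a bounded operator on $L^\infty$, the remainder is controlled pointwise by $|\sqrt a-\sqrt b-(a-b)/(2\sqrt b)|\le C_\delta|a-b|^2$ for $a,b\ge\delta$, and $f\mapsto1/(2\sqrt f)$ is Lipschitz from $\{f\ge\delta\}$ to $L^\infty$; hence $\sqrt{A(\varepsilon,\varphi,\chi,\tau)}$ is $\mathcal{C}^1$. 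In the same spirit $K_1$, $K_2$ are polynomials in $(\varepsilon,\varphi,\chi,\tau)$ whose terms, after the division by $r$ already built into $L_1$, $L_2$, are products of the elementary maps landing in $L^2(\mathbb{R}^3)$, while $K_3$ is such a polynomial landing in $L^1((0,\infty),dr)$; both are $\mathcal{C}^\infty$. Consequently $L_1$, $L_2$ are finite sums of products of $\mathcal{C}^1$ maps valued in $L^2(\mathbb{R}^3)$ and $L_3$ is a finite sum of products of $\mathcal{C}^1$ maps valued in $L^1((0,\infty),dr)$, which is the assertion.

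I expect the only genuine obstacle to be the bookkeeping in step (i): verifying that each singular term really lies in the claimed target space, i.e.\ that no term is too singular at $r=0$ nor decays too slowly at $r=\infty$. This is the sole place where hard analysis, as opposed to soft multilinear algebra, is needed, and it is carried entirely by Hardy's inequality, the embeddings $H^1(\mathbb{R}^3)\hookrightarrow L^q(\mathbb{R}^3)$ for $2\le q\le6$, and the property $\rho/r\in L^2((0,\infty),dr)$ of (\ref{eq:condsfunct2}); a representative case is the term $\frac1{r^2}\int_0^r\chi^2\,ds$ of $K_3$, which belongs to $L^1((0,\infty),dr)$ precisely because $\int_0^\infty\chi(s)^2/s\,ds<\infty$. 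A secondary, minor point is the $\mathcal{C}^1$-regularity of the square-root superposition operator on $L^\infty$, handled as above.
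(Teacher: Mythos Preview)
Your proposal is correct and rests on the same analytical estimates as the paper's proof, but the organisation is genuinely different. The paper proceeds term by term: it first checks that each $L_i$ maps into its target space (using exactly the tools you name---Hardy, the embeddings $H^1(\mathbb{R}^3)\hookrightarrow L^q$, and the properties in (\ref{eq:condsfunct2})), then writes out the partial Fr\'echet derivatives $\partial L_i/\partial\varphi$, $\partial L_i/\partial\chi$, $\partial L_i/\partial\tau$ explicitly, bounds every term of each, and asserts continuity. You instead factor each $L_i$ into bounded (multi)linear building blocks---the linear maps $\varphi\mapsto\frac1r\varphi'$, $\varphi\mapsto\varphi/r^2$, the averaging operators, pointwise products---and invoke the fact that bounded multilinear maps are automatically $\mathcal{C}^\infty$, so that no explicit derivative ever has to be computed; the only non-multilinear step is the superposition $f\mapsto\sqrt f$ on $\{f\in L^\infty:f\ge\delta\}$, which you treat correctly. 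Your route is more economical and avoids the long explicit formulae for the partials that fill the paper's argument; the paper's hands-on route has the modest advantage of displaying the derivatives concretely. The hard analysis is identical in both: the ``representative case'' $\frac1{r^2}\int_0^r\chi^2\,ds\in L^1$ you highlight is precisely the template the paper applies, term by term, without abstracting the pattern.
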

Before starting the proof of the lemma we observe that for a radial function $\rho$ such that $\frac{\rho}{r}\in H^1_{r}\left(\mathbb{R}^3\right)$ we have 
\begin{equation}\label{eq:radialfunction}
\left|\rho(r)\right|\leq r^{1/2}\left\|\frac{d}{dr}\left(\frac{\rho(r)}{r}\right)\right\|_{L^2_{rad}}.
\end{equation}
We remind that $H^1_{r}\left(\mathbb{R}^3\right)=\left\{u\in H^1\left(\mathbb{R}^3\right)\left|\ u\ \mbox{is radial}\right.\right\}$.

\begin{proof}
We begin with $L_3$; first, we have to prove that it is well defined in $Y_\tau=L^1((0,\infty),dr)$. We remark that 
\begin{eqnarray*}
\left|L_3\left(\varepsilon,\varphi,\chi,\tau \right)\right|&\leq& C_1\left|\frac{d}{dr}\tau\right|+\frac{C_2}{r^2}\int_0^r\left|\varphi^2+\varepsilon\chi^2\right|\,ds+\frac{C_3}{r}\left|\varphi^2+\varepsilon\chi^2\right|+\\
&&+\frac{C_4}{r^2}\left|\varphi\chi\right|+\frac{C_5}{r}\left|\varphi^2-\varepsilon\chi^2\right|
\end{eqnarray*}
where $C_1,C_2,C_3,C_4,C_5$ are positive constants and, by definition, we have that $\frac{d}{dr}\tau \in L^1((0,\infty),dr)$.\\
Next, we have
\begin{equation*}
\int_0^{\infty}{\frac{1}{r^2}\int_0^r\left|\varphi^2+\varepsilon\chi^2\right|\,ds\,dr} =\int_0^{\infty}{\frac{\left|\varphi^2+\varepsilon\chi^2\right|}{s}\,ds}<+\infty,
\end{equation*}
using Hölder's inequality, then $\frac{1}{r^2}\int_0^r\left|\varphi^2+\varepsilon\chi^2\right|\,ds\in Y_\tau$. In the same way,
we can conclude that $\frac{1}{r}\left(\varphi^2+\varepsilon\chi^2\right), \frac{1}{r}\left(\varphi^2-\varepsilon\chi^2\right)\in Y_\tau$.\\
Finally, 
\begin{equation*}
\int_0^{\infty}{\frac{|\varphi|}{r}\frac{|\chi|}{r}\,dr}\leq C\left\|\frac{\varphi}{r}\right\|_{L^2((0,\infty))}\left\|\frac{\chi}{r}\right\|_{L^2((0,\infty))}<+\infty
\end{equation*}
thanks to (\ref{eq:condsfunct2}), then $\frac{1}{r^2}\varphi\chi\in Y_\tau$.\\
Now, we have to prove that $L_3\left(\varepsilon,\varphi,\chi,\tau \right)$ is $\mathcal{C}^1$; by classical arguments, it's enough to show that for $(h_1,h_2,h_3)\in B_{\varphi}\times B_{\chi}\times B_{\tau}$
\begin{eqnarray*}
\frac{\partial}{\partial\varphi}\left(L_3\left(\varepsilon,\varphi,\chi,\tau \right)\right)h_1\in Y_\tau,\\
\frac{\partial}{\partial\chi}\left(L_3\left(\varepsilon,\varphi,\chi,\tau \right)\right)h_2\in Y_\tau,\\
\frac{\partial}{\partial\tau}\left(L_3\left(\varepsilon,\varphi,\chi,\tau \right)\right)h_3\in Y_\tau.
\end{eqnarray*} 
We begin with $\frac{\partial}{\partial\varphi}\left(L_3\left(\varepsilon,\varphi,\chi,\tau \right)\right)$,
\begin{eqnarray*}
&&\frac{\partial}{\partial\varphi}\left(L_3\left(\varepsilon,\varphi,\chi,\tau \right)\right)h_1=\left(\frac{\partial}{\partial\varphi}\left(A\left(\varepsilon,\varphi,\chi,\tau \right)\right)h_1\right)\frac{d}{dr}\tau+\\
&&+\frac{16\pi( m-\varepsilon)}{r^2}\left(\int_0^r(1+\varepsilon\tau)^2\varphi h_1\,ds\right)(1+\varepsilon\tau)+\\
&&+16\pi(m- \varepsilon)(1+\varepsilon\tau)^3\frac{\varphi h_1}{r}-16\pi \varepsilon(1+\varepsilon\tau)^2\frac{h_1\chi}{r^2}+\\
&&-16\pi m(1+\varepsilon\tau)^2\frac{\varphi h_1}{r};
\end{eqnarray*}
for $\frac{\partial}{\partial\chi}\left(L_3\left(\varepsilon,\varphi,\chi,\tau \right)\right)$,
\begin{eqnarray*}
&&\frac{\partial}{\partial\chi}\left(L_3\left(\varepsilon,\varphi,\chi,\tau \right)\right)h_2=\left(\frac{\partial}{\partial\chi}\left(A\left(\varepsilon,\varphi,\chi,\tau \right)\right)h_2\right)\frac{d}{dr}\tau+\\
&&+\frac{16\pi( m-\varepsilon)\varepsilon}{r^2}\left(\int_0^r(1+\varepsilon\tau)^2\chi h_2\,ds\right)(1+\varepsilon\tau)+\\
&&+16\pi(m- \varepsilon)\varepsilon(1+\varepsilon\tau)^3\frac{\chi h_2}{r}-16\pi \varepsilon(1+\varepsilon\tau)^2\frac{\varphi h_2}{r^2}+\\
&&-16\pi m\varepsilon(1+\varepsilon\tau)^2\frac{\chi h_2}{r}
\end{eqnarray*}
and, finally,
\begin{eqnarray*}
&&\frac{\partial}{\partial\tau}\left(L_3\left(\varepsilon,\varphi,\chi,\tau \right)\right)h_3=\left(\frac{\partial}{\partial\tau}\left(A\left(\varepsilon,\varphi,\chi,\tau \right)\right)h_3\right)\frac{d}{dr}\tau+\\
&&+A\left(\varepsilon,\varphi,\chi,\tau \right)\frac{d}{dr}h_3+\\
&&+\frac{16\pi( m-\varepsilon)\varepsilon}{r^2}\left(\int_0^r(1+\varepsilon\tau)h_3\left(\varphi^2+\varepsilon\chi^2\right)\,ds\right)(1+\varepsilon\tau)+\\
&&+\frac{8\pi( m-\varepsilon)\varepsilon}{r^2}\left(\int_0^r(1+\varepsilon\tau)^2\left(\varphi^2+\varepsilon\chi^2\right)\,ds\right)h_3+\\
&&+24\pi(m- \varepsilon)\varepsilon(1+\varepsilon\tau)^2\frac{\left(\varphi^2+\varepsilon\chi^2\right)}{r}h_3-32\pi \varepsilon^2(1+\varepsilon\tau)\frac{\varphi \chi}{r^2}h_3+\\
&&-16\pi m\varepsilon(1+\varepsilon\tau)\frac{\left(\varphi^2-\varepsilon\chi^2\right)}{r}h_3.
\end{eqnarray*}
First of all, we remark that if $\varphi,h_1\in B_\varphi$, $\chi,h_2\in B_\chi$ and $\tau,h_3\in B_\tau$, then $\frac{\partial}{\partial\varphi}\left(A\left(\varepsilon,\varphi,\chi,\tau \right)\right)h_1$, $\frac{\partial}{\partial\chi}\left(A\left(\varepsilon,\varphi,\chi,\tau \right)\right)h_2$ and $\frac{\partial}{\partial\tau}\left(A\left(\varepsilon,\varphi,\chi,\tau \right)\right)h_3$ are bounded. So, we have that  
\begin{eqnarray*}
\left|\frac{\partial L_3}{\partial\varphi}h_1\right|&\leq&C_1\left|\frac{d}{dr}\tau\right|+\frac{C_2}{r^2}\left(\int_0^r\left|\varphi h_1\right|\,ds\right)+C_3\frac{\left|\varphi h_1\right|}{r}+C_4\frac{|h_1\chi|}{r^2}\\
\left|\frac{\partial L_3}{\partial\chi}h_2\right|&\leq&C_5\left|\frac{d}{dr}\tau\right|+\frac{C_6}{r^2}\left(\int_0^r\left|\chi h_2\right|\,ds\right)+C_7\frac{\left|\chi h_2\right|}{r}+C_8\frac{|\varphi h_2|}{r^2}\\
\left|\frac{\partial L_3}{\partial\tau}h_3\right|&\leq&C_9\left|\frac{d}{dr}\tau\right|+C_{10}\left|\frac{d}{dr}h_3\right|+\frac{C_{11}}{r^2}\int_0^r\left|\varphi^2+\varepsilon\chi^2\right|\,ds+C_{12}\frac{\left|\varphi^2+\varepsilon\chi^2\right|}{r}+\\
&&+C_{13}\frac{|\varphi \chi|}{r^2}+C_{14}\frac{\left|\varphi^2-\varepsilon\chi^2\right|}{r}
\end{eqnarray*}
with $C_i$ positive constants.
With exactly the same arguments used above, we conclude that
\begin{eqnarray*}
\int_0^{\infty}{\left|\frac{\partial}{\partial\varphi}\left(L_3\left(\varepsilon,\varphi,\chi,\tau \right)\right)h_1 \right|\,dr}<+\infty\\
\int_0^{\infty}{\left|\frac{\partial}{\partial\chi}\left(L_3\left(\varepsilon,\varphi,\chi,\tau \right)\right)h_2 \right|\,dr}<+\infty\\
\int_0^{\infty}{\left|\frac{\partial}{\partial\tau}\left(L_3\left(\varepsilon,\varphi,\chi,\tau \right)\right)h_3 \right|\,dr}<+\infty\\
\end{eqnarray*}
if $(\varepsilon,\varphi,\chi,\tau)\in (-\varepsilon_1,\varepsilon_2)\times B_{\varphi}\times B_{\chi}\times B_{\tau},$ and $(h_1,h_2,h_3)\in B_{\varphi}\times B_{\chi}\times B_{\tau}$.\\
Furthermore $\frac{\partial L_3}{\partial\varphi},\frac{\partial L_3}{\partial\chi}$ and $\frac{\partial L_3}{\partial\tau}$ are continuous; thus the proof for $L_3$.

We consider now $L_1$; first, we have to prove that it is well defined in $Y_\varphi$. We observe that 
\begin{equation*}
\left|L_1\left(\varepsilon,\varphi,\chi,\tau \right)\right|\leq C_1\left|\frac{1}{r}\frac{d}{dr}\varphi\right|+\left|\frac{\varphi}{r^2} \right|+C_2\left|\frac{\chi}{r}\right|
\end{equation*}
with $C_1,C_2$ positive constants then, $L_1\left(\varepsilon,\varphi,\chi,\tau \right)\in L^2\left(\mathbb{R}^3\right)$, thanks to conditions (\ref{eq:condsfunct2}).\\
Now, we have to prove that $L_1\left(\varepsilon,\varphi,\chi,\tau \right)$ is $\mathcal{C}^1$; by classical arguments, it's enough to show that for $(h_1,h_2,h_3)\in B_{\varphi}\times B_{\chi}\times B_{\tau}$
\begin{eqnarray*}
\frac{\partial}{\partial\varphi}\left(L_1\left(\varepsilon,\varphi,\chi,\tau \right)\right)h_1\in Y_\varphi,\\
\frac{\partial}{\partial\chi}\left(L_1\left(\varepsilon,\varphi,\chi,\tau \right)\right)h_2\in Y_\varphi,\\
\frac{\partial}{\partial\tau}\left(L_1\left(\varepsilon,\varphi,\chi,\tau \right)\right)h_3\in Y_\varphi.
\end{eqnarray*} 
By a straightforward computation, we find out
\begin{eqnarray*}
&&\frac{\partial L_1}{\partial\varphi}h_1=\frac{1}{2}A^{-1/2}\left(\frac{\partial A}{\partial\varphi}h_1\right)\frac{1}{r}\frac{d}{dr}\varphi+A^{1/2}\frac{1}{r}\frac{d}{dr}h_1-\frac{h_1}{r^2},\\
&&\frac{\partial L_1}{\partial\chi}h_2=\frac{1}{2}A^{-1/2}\left(\frac{\partial A}{\partial\chi}h_2\right)\frac{1}{r}\frac{d}{dr}\varphi+(2m-\varepsilon)\frac{h_2}{r}+\varepsilon(m-\varepsilon)\tau \frac {h_2}{r},\\
&&\frac{\partial L_1}{\partial\tau}h_3=\frac{1}{2}A^{-1/2}\left(\frac{\partial A}{\partial\tau}h_3\right)\frac{1}{r}\frac{d}{dr}\varphi+\varepsilon(m-\varepsilon)h_3\frac{\chi}{r};
\end{eqnarray*}
and, using the positivity of $A$,
\begin{eqnarray*}
\left|\frac{\partial L_1}{\partial\varphi}h_1\right|&\leq&C_1\left|\frac{1}{r}\frac{d}{dr}\varphi\right|+C_2\left|\frac{1}{r}\frac{d}{dr}h_1\right|+\left|\frac{h_1}{r^2}\right|\\
\left|\frac{\partial L_1}{\partial\chi}h_2\right|&\leq&C_3\left|\frac{1}{r}\frac{d}{dr}\varphi\right|+C_4\left|\frac{h_2}{r}\right|\\
\left|\frac{\partial L_3}{\partial\tau}h_3\right|&\leq&C_5\left|\frac{1}{r}\frac{d}{dr}\varphi\right|+C_6\left|\frac{\chi}{r}\right|.
\end{eqnarray*}
with $C_i$ positive constants.
Then, we can conclude that
\begin{eqnarray*}
\int_{\mathbb{R}^3}{\left|\frac{\partial}{\partial\varphi}\left(L_3\left(\varepsilon,\varphi,\chi,\tau \right)\right)h_1 \right|^2\,dx}<+\infty\\
\int_{\mathbb{R}^3}{\left|\frac{\partial}{\partial\chi}\left(L_3\left(\varepsilon,\varphi,\chi,\tau \right)\right)h_2 \right|^2\,dx}<+\infty\\
\int_{\mathbb{R}^3}{\left|\frac{\partial}{\partial\tau}\left(L_3\left(\varepsilon,\varphi,\chi,\tau \right)\right)h_3 \right|^2\,dx}<+\infty\\
\end{eqnarray*}
if $(\varepsilon,\varphi,\chi,\tau)\in (-\varepsilon_1,\varepsilon_2)\times B_{\varphi}\times B_{\chi}\times B_{\tau},$ and $(h_1,h_2,h_3)\in B_{\varphi}\times B_{\chi}\times B_{\tau}$.\\
Furthermore $\frac{\partial L_1}{\partial\varphi},\frac{\partial L_1}{\partial\chi}$ and $\frac{\partial L_1}{\partial\tau}$ are continuous; thus the proof for $L_1$ and with the same arguments for $L_2$.\\
\end{proof}

\subsection{Branches generated by solutions of Choquard equation} 
In this subsection, we show that a solution $\phi_0=(\varphi_0,\chi_0,\tau_0)$ of (\ref{eq:systemperturbed2}) can generate a local branch of solutions of (\ref{eq:systemperturbed}). 

First, we linearize the operator $D$ on $(\varphi,\chi,\tau)$ around $(0,\phi_0)$
$$
D_{\varphi,\chi,\tau}(0,\phi_0)(h,k,l)=\left(
\begin{array}{c}
\frac{1}{r}\frac{d}{dr}h-\frac{h}{r^2}+2m\frac{k}{r}\\[5pt]
\frac{1}{r}\frac{d}{dr}k+\frac{k}{r^2}+\frac{h}{r}-m \frac{h}{r}\tau_0-m\frac{\varphi_0}{r} l\\[5pt]
\frac{d}{dr}l+\frac{16\pi m}{r^2}\int_0^r\varphi_0h\,ds
\end{array}
\right).
$$
Now, if we prove that $D_{\varphi,\chi,\tau}(0,\phi_0)$ is an isomorphism, the implicit function theorem can be applied and we can find solutions of (\ref{eq:systemperturbed}) near the ground state $
\phi_0$.

\begin{lem}\label{lemma:isoV} We define the operator $V:X_\varphi\times X_\chi\rightarrow Y_\varphi\times Y_\chi$, by 
$$
V(\varphi,\chi)=\left(\begin{array}{c}\frac{1}{r}\frac{d}{dr}\varphi-\frac{1}{r^2} \varphi +2m\frac{1}{r} \chi\\[5pt]
\frac{1}{r}\frac{d}{dr}\chi+\frac{1}{r^2}\chi+\frac{1}{r}\varphi \\
\end{array}\right),
$$
then $V$ is an isomorphism of $X_\varphi\times X_\chi$ onto $Y_\varphi\times Y_\chi$.
\end{lem}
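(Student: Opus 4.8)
The plan is to pass to the rescaled variables $u=\varphi/|x|$ and $w=\chi/|x|$, decouple the first--order $2\times2$ system defining $V$ into a single Poisson--type equation for $u$, solve that equation by Lax--Milgram, and then recover $\chi$ from $u$ by an explicit integration. First I would record the isometric identifications on which everything rests: $\varphi\mapsto u=\varphi/|x|$ maps $X_\varphi$ onto $H^1_r(\mathbb{R}^3)$, while $\chi\mapsto v=(\chi/|x|)\,\sigma^r\binom{1}{0}$ maps $X_\chi$ onto the closed subspace $\mathcal{H}\subset H^1(\mathbb{R}^3,\mathbb{C}^2)$ of fields of that form; since $\sigma^r\binom{1}{0}$ is, up to a constant factor, a spinor spherical harmonic with orbital quantum number $1$, one has
\[
\|v\|_{L^2(\mathbb{R}^3)}^2=4\pi\|w\|_{L^2(r^2\,dr)}^2,\qquad
\|\nabla v\|_{L^2(\mathbb{R}^3)}^2=4\pi\Big(\|w'\|_{L^2(r^2\,dr)}^2+2\|w/r\|_{L^2(r^2\,dr)}^2\Big).
\]
Using $\frac1r\frac{d}{dr}\varphi-\frac{\varphi}{r^2}=u'$ and $\frac1r\frac{d}{dr}\chi+\frac{\chi}{r^2}=w'+\frac2r w$, the operator $V$ becomes in these variables the map $(u,w)\mapsto\big(u'+2mw,\ w'+\frac2r w+u\big)$ with values in the radial functions of $Y_\varphi=Y_\chi=L^2(\mathbb{R}^3)$, and the displayed identities show immediately that $V$ is bounded.

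For surjectivity, fix $(f,g)\in Y_\varphi\times Y_\chi$. Eliminating $w=(f-u')/(2m)$ between $u'+2mw=f$ and $w'+\frac2r w+u=g$ gives
\[
-\Delta u+2mu=2mg-\tfrac1{r^2}(r^2f)',
\]
and the right--hand side lies in $H^{-1}_r(\mathbb{R}^3)$ with norm at most $C\big(\|f\|_{L^2}+\|g\|_{L^2}\big)$, since $\tfrac1{r^2}(r^2f)'=\operatorname{div}\!\big(f\,x/|x|\big)$ with $f\,x/|x|\in L^2(\mathbb{R}^3,\mathbb{R}^3)$. As the bilinear form $\int\nabla u\cdot\nabla\phi+2m\int u\phi$ is coercive on $H^1_r(\mathbb{R}^3)$, Lax--Milgram yields a unique $u\in H^1_r(\mathbb{R}^3)$; set $\varphi=ru$. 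Define
\[
w(r)=\frac1{r^2}\int_0^r s^2\big(g(s)-u(s)\big)\,ds,
\]
the solution of $w'+\frac2r w=g-u$ that is regular at the origin. A Hardy--type estimate (for instance a Schur test for the kernel $s\,r^{-2}$ on $\{s<r\}$, or the identity $\|w'+\frac2r w\|_{L^2(r^2\,dr)}^2=\|w'\|_{L^2(r^2\,dr)}^2+2\|w/r\|_{L^2(r^2\,dr)}^2$) bounds $\|w'\|_{L^2(r^2\,dr)}$ and $\|w/r\|_{L^2(r^2\,dr)}$ by $\|g-u\|_{L^2}$, so by the norm identities $v=w\,\sigma^r\binom{1}{0}\in H^1$, that is $\chi:=rw\in X_\chi$, and moreover $\|\varphi\|_{X_\varphi}+\|\chi\|_{X_\chi}\le C\big(\|f\|_{L^2}+\|g\|_{L^2}\big)$. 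By construction the second equation of the system holds; the first, $u'+2mw=f$, follows because $w$ and $(f-u')/(2m)$ both solve $\xi'+\frac2r\xi=g-u$ (the latter using the equation for $u$), so their difference is a multiple of $r^{-2}$, which must vanish since it lies in $L^2(\mathbb{R}^3)$. Hence $V(\varphi,\chi)=(f,g)$.

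For injectivity, if $V(\varphi,\chi)=0$ then $u=\varphi/|x|$ solves $-\Delta u+2mu=0$ in $H^1_r(\mathbb{R}^3)$, so pairing with $u$ gives $u\equiv0$, hence $w=-u'/(2m)\equiv0$ and $\varphi=\chi=0$. Thus $V$ is a bounded linear bijection between Banach spaces, and by the bounded inverse theorem --- or directly from the bound $\|\varphi\|_{X_\varphi}+\|\chi\|_{X_\chi}\le C\big(\|f\|_{L^2}+\|g\|_{L^2}\big)$ obtained above --- $V^{-1}$ is bounded, so $V$ is an isomorphism. The step I expect to require the most care is the regularity bookkeeping for $\chi$ --- verifying that the recovered profile $w$ produces an $H^1$ spinor field --- which rests on the Hardy/Schur estimate controlling $\|w/r\|_{L^2(r^2\,dr)}$ by $\|g-u\|_{L^2}$, together with checking that $\operatorname{div}(f\,x/|x|)$ is correctly interpreted in $H^{-1}_r$ and that the boundary contributions at $r=0$ and $r=\infty$ in the relevant integrations by parts genuinely vanish.
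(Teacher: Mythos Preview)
Your argument is correct and takes a genuinely different route from the paper's. The paper does not argue directly in radial coordinates: it observes that $V$ is the restriction to a single partial-wave sector of the three-dimensional first-order operator $\overline V(u,v)=(i\bar\sigma\nabla u+2mv,\,-i\bar\sigma\nabla v+u)$, cites Lemma~2.3 of Ounaies to the effect that $\overline V:H^1(\mathbb{R}^3,\mathbb{C}^2)^2\to L^2(\mathbb{R}^3,\mathbb{C}^2)^2$ is an isomorphism, and then uses the invariance of the Dirac operator under the partial-wave decomposition (Thaller) to conclude that $\overline V$ restricts to an isomorphism on each such sector, in particular on $X_\varphi\times X_\chi$. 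By contrast, you decouple the radial $2\times2$ system into the scalar equation $-\Delta u+2mu=2mg-\operatorname{div}(f\,x/|x|)\in H^{-1}_r$, solve it by Lax--Milgram, and recover the second component by an explicit integral together with the Hardy-type identity $\|w'+\tfrac{2}{r}w\|_{L^2(r^2\,dr)}^2=\|w'\|_{L^2(r^2\,dr)}^2+2\|w/r\|_{L^2(r^2\,dr)}^2$. The paper's path is shorter but black-boxes both an external lemma and the spectral decomposition; yours is self-contained, stays entirely within radial analysis, and in effect reproves the special case of Ounaies' lemma that is actually needed here. Your closing caveat is on target: the Hardy identity controls $\|w'\|_{L^2(r^2\,dr)}$ and $\|w/r\|_{L^2(r^2\,dr)}$ but not $\|w\|_{L^2(r^2\,dr)}$ directly, so the cleanest order is to first establish $w=(f-u')/(2m)$ (your $r^{-2}$ argument, noting that the integral formula for $w$ is locally $L^2(r^2\,dr)$ near the origin while $r^{-2}$ is not) and then read off $w\in L^2(r^2\,dr)$ from $f,u'\in L^2$; the boundary term $[rw^2]_0^\infty$ in the Hardy integration by parts also deserves one explicit line.
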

This lemma is obvious if we remind that $L^2(\mathbb{R}^3,\mathbb{C}^4)$ can be written as the direct sum of partial wave subspaces and that the Dirac operator leaves invariant all these subspaces (see \cite{thaller}). So, thanks to lemma $2.3$ of \cite{ounaies}, we know that
$\overline V : H^1\left(\mathbb{R}^3,\mathbb{C}^2\right)\times H^1\left(\mathbb{R}^3,\mathbb{C}^2\right)\rightarrow L^2\left(\mathbb{R}^3,\mathbb{C}^2\right)\times L^2\left(\mathbb{R}^3,\mathbb{C}^2\right)$ defined by
$$
\overline V(u,v)=\left(\begin{array}{c}i\bar\sigma\nabla u+2m v\\ -i\bar\sigma\nabla v+ u \end{array}\right)
$$
is an isomorphism of $H^1\left(\mathbb{R}^3,\mathbb{C}^2\right)\times H^1\left(\mathbb{R}^3,\mathbb{C}^2\right)$ onto $L^2\left(\mathbb{R}^3,\mathbb{C}^2\right)\times L^2\left(\mathbb{R}^3,\mathbb{C}^2\right)$ and then 
$\overline{V}$ is an isomorphism of each partial wave subspace. In particular, $V$ coincide with $\overline V$ on the partial wave subspace $X_\varphi\times  X_\chi$.

\begin{lem}\label{lemma:iso} We define the operator $W:X_\varphi\times 
X_\chi\times X_\tau\rightarrow Y_\varphi\times Y_\chi\times Y_\tau$, by 
$$
W(h,k,l)=\left(
\begin{array}{c}
\frac{1}{r}\frac{d}{dr}h-\frac{h}{r^2}+2m\frac{k}{r}\\[5pt]
\frac{1}{r}\frac{d}{dr}k+\frac{k}{r^2}+\frac{h}{r}-m\frac{\varphi_0}{r} l\\[5pt]
\frac{d}{dr}l
\end{array}
\right),
$$
then $W$ is an isomorphism of $X_\varphi\times 
X_\chi\times X_\tau$ onto $Y_\varphi\times Y_\chi\times Y_\tau$.
\end{lem}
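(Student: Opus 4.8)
The plan is to solve $W(h,k,l)=(f,g,p)$ for an arbitrary $(f,g,p)\in Y_\varphi\times Y_\chi\times Y_\tau$, exploiting the block-triangular shape of $W$: its third component $\frac{d}{dr}l$ involves $l$ alone, while its first two components coincide with $V(h,k)$ — where $V$ is the operator of Lemma~\ref{lemma:isoV} — up to the single term $-m\frac{\varphi_0}{r}l$ in the second slot. Hence $W(h,k,l)=(f,g,p)$ is equivalent to the pair
$$\frac{d}{dr}l=p,\qquad V(h,k)=\Bigl(f,\ g+m\tfrac{\varphi_0}{r}\,l\Bigr),$$
which I would solve in succession.

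First I would recover $l$ from $p$. Since $p\in Y_\tau=L^1((0,\infty),dr)$, the function $l(r):=-\int_r^\infty p(s)\,ds$ is absolutely continuous, has $\frac{d}{dr}l=p\in L^1$, satisfies $|l(r)|\le\|p\|_{Y_\tau}$ and tends to $0$ at infinity; thus $l\in X_\tau$ with $\|l\|_{X_\tau}=\|p\|_{Y_\tau}$, and it is the only element of $X_\tau$ with derivative $p$, since two such functions differ by a constant that the condition at infinity forces to vanish. In particular $l\mapsto\frac{d}{dr}l$ is already an isomorphism of $X_\tau$ onto $Y_\tau$.

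With this $l$ in hand, I would check that $\bigl(f,\,g+m\frac{\varphi_0}{r}l\bigr)$ belongs to $Y_\varphi\times Y_\chi=L^2(\mathbb{R}^3)\times L^2(\mathbb{R}^3)$; the only term needing attention is $\frac{\varphi_0}{r}l$. Under the identification $r=|x|$ one has $\frac{\varphi_0(|x|)}{|x|}=u_0(x)$, the radial ground state of Choquard's equation, which is bounded (in fact smooth with exponential decay), so $\bigl\|\frac{\varphi_0}{r}l\bigr\|_{L^2(\mathbb{R}^3)}\le\|l\|_{L^\infty}\,\|u_0\|_{L^2(\mathbb{R}^3)}\le C\,\|p\|_{Y_\tau}$. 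Lemma~\ref{lemma:isoV} then produces a unique $(h,k)=V^{-1}\bigl(f,\,g+m\frac{\varphi_0}{r}l\bigr)\in X_\varphi\times X_\chi$ depending boundedly on $(f,g,p)$. Combining the two steps shows that $W$ is a bijection with bounded inverse; together with the boundedness of $W$ itself — for the first component one writes $\frac{1}{r}\frac{d}{dr}h-\frac{h}{r^2}=\frac{d}{dr}(h/r)$, the radial derivative of $h(|x|)/|x|\in H^1(\mathbb{R}^3)$, and estimates the remaining terms via (\ref{eq:condsfunct2}) exactly as in Lemma~\ref{lemma:regularity}, while $\frac{d}{dr}l\in Y_\tau$ by the very definition of $X_\tau$ — this gives that $W$ is an isomorphism.

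I do not expect a genuine obstacle here: the argument is simply a matter of inverting the diagonal blocks of a block-triangular operator and back-substituting. The one point that calls for a little care is the bookkeeping between radial functions of $r\in(0,\infty)$ and functions on $\mathbb{R}^3$, and within it the verification that the coupling term $m\frac{\varphi_0}{r}l$ indeed lies in $L^2(\mathbb{R}^3)$ — which is exactly where the smoothness and exponential decay of the Choquard ground state, combined with $l\in L^\infty$, come into play.
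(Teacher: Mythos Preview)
Your proposal is correct and follows essentially the same route as the paper: you exploit the block-triangular structure of $W$, first invert $\frac{d}{dr}$ on $X_\tau$ via $l(r)=-\int_r^\infty p(s)\,ds$, then feed the resulting $l$ into the $(h,k)$-block and invoke Lemma~\ref{lemma:isoV} after checking $m\frac{\varphi_0}{r}l\in L^2(\mathbb{R}^3)$. The paper's proof is identical in structure; your version is slightly more explicit about uniqueness of $l$ and about why $\frac{\varphi_0}{r}l\in L^2$.
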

\begin{proof}
First we prove that $W$ is one to one. We observe that $W(h,k,l)=0$ if and only if $(h,k,l)$ satisfies
$$
\left\{
\begin{array}{l}
\frac{1}{r}\frac{d}{dr}h-\frac{h}{r^2}+2m\frac{k}{r}=0\\[5pt]
\frac{1}{r}\frac{d}{dr}k+\frac{k}{r^2}+\frac{h}{r}-m\frac{\varphi_0}{r} l=0\\[5pt]
\frac{d}{dr}l=0
\end{array}
\right.
$$
in $Y_\varphi\times Y_\chi \times Y_\tau$. In particular, we must have $l\equiv0$ and $(h,k)$ solution of
$$
\left\{
\begin{array}{l}
\frac{1}{r}\frac{d}{dr}h-\frac{h}{r^2}+2m\frac{k}{r}=0\\[5pt]
\frac{1}{r}\frac{d}{dr}k+\frac{k}{r^2}+\frac{h}{r}=0
\end{array}
\right.
$$
that is equivalent to $V(h,k)=0$. So, thank to lemma \ref{lemma:isoV}, $h\equiv k\equiv 0$ and $W$ is one to one in $Y_\varphi\times Y_\chi \times Y_\tau$.

Secondly, we have to prove that for $f=(f_1,f_2,f_3)\in Y_\varphi\times Y_\chi \times Y_\tau$, there exists $(h,k,l)\in X_\varphi\times 
X_\chi\times X_\tau$ such that $W(h,k,l)=f$. This means that the system 
$$
\left\{
\begin{array}{l}
\frac{1}{r}\frac{d}{dr}h-\frac{h}{r^2}+2m\frac{k}{r}=f_1\\[5pt]
\frac{1}{r}\frac{d}{dr}k+\frac{k}{r^2}+\frac{h}{r}-m\frac{\varphi_0}{r} l=f_2\\[5pt]
\frac{d}{dr}l=f_3
\end{array}
\right.
$$
has a solution in $X_\varphi\times X_\chi\times X_\tau$ for all $(f_1,f_2,f_3)\in Y_\varphi\times Y_\chi \times Y_\tau$.
We observe that $\forall f_3\in L^1((0,\infty),dr)$ there exist $l^*(r)=-\int_r^\infty f_3\,ds $ such that $\frac{d}{dr}l^*=f_3$; furthermore $l^*\in X_\tau$. So,
we have to show that 
\begin{equation}\label{eq:prooflemmaiso}
\left\{
\begin{array}{l}
\frac{1}{r}\frac{d}{dr}h-\frac{h}{r^2}+2m\frac{k}{r}=f_1\\[5pt]
\frac{1}{r}\frac{d}{dr}k+\frac{k}{r^2}+\frac{h}{r}=f_2+m\frac{\varphi_0}{r} l^*
\end{array}
\right.
\end{equation}
has a solution in $X_\varphi\times X_\chi$ for all $(f_1,f_2)\in Y_\varphi\times Y_\chi$.\\ Now, we remark that $\frac{\varphi_0}{r} l^*\in L^2\left(\mathbb{R}^3\right)$ and then, thanks to lemma \ref{lemma:isoV}, (\ref{eq:prooflemmaiso}) has a solution  in $X_\varphi\times X_\chi$ for all $(f_1,f_2)\in Y_\varphi\times Y_\chi$.\\
In conclusion $W$ is an  isomorphism of $X_\varphi\times X_\chi\times X_\tau$ onto $Y_\varphi\times Y_\chi\times Y_\tau$.\\
\end{proof}

Finally, we observe that $D_{\varphi,\chi,\tau}(0,\phi_0)(h,k,l)$ can be written as 
\begin{equation}
D_{\varphi,\chi,\tau}(0,\phi_0)(h,k,l)=W(h,k,l)+S(h)
\end{equation}
with
\begin{equation}\label{eq:defS}
S(h)=\left(
\begin{array}{c}
0\\[5pt]
-m \frac{h}{r}\tau_0\\[5pt]
\frac{16\pi m}{r^2}\int_0^r\varphi_0h\,ds\\[5pt]
\end{array}
\right).
\end{equation}

\begin{thm}\label{th:principalth1} Let $\phi_0$ be the ground state solution of (\ref{eq:systemperturbed2}), then there exists $\delta>0$ and a function $\eta\in\mathcal{C}((0,\delta),X_\varphi\times 
X_\chi\times X_\tau)$ such that $\eta(0)=\phi_0$ and $D(\varepsilon,\eta(\varepsilon))=0$ for $0\leq \varepsilon <\delta$.
\end{thm}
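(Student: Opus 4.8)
The plan is to obtain Theorem~\ref{th:principalth1} as a direct application of the implicit function theorem to the $\mathcal{C}^1$ operator $D$ at the point $(0,\phi_0)$. Since $\phi_0$ solves (\ref{eq:systemepsilonzero}), which is the $\varepsilon=0$ case of the system defining $D$, we have $D(0,\phi_0)=0$; Lemma~\ref{lemma:regularity} shows that $D$ is of class $\mathcal{C}^1$ on $(-\varepsilon_1,\varepsilon_2)\times B_\varphi\times B_\chi\times B_\tau$, and after enlarging the radii of the balls (which only shrinks $\varepsilon_1,\varepsilon_2$ without making them vanish) we may assume that $\phi_0$ lies in the interior of $B_\varphi\times B_\chi\times B_\tau$. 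Hence the whole theorem reduces to proving that the partial derivative $D_{\varphi,\chi,\tau}(0,\phi_0)$ is a bounded invertible operator from $X_\varphi\times X_\chi\times X_\tau$ onto $Y_\varphi\times Y_\chi\times Y_\tau$: once this is known, the implicit function theorem furnishes $\delta>0$ and a $\mathcal{C}^1$, in particular continuous, curve $\varepsilon\mapsto\eta(\varepsilon)$ defined near $0$ with $\eta(0)=\phi_0$ and $D(\varepsilon,\eta(\varepsilon))=0$, and restricting to $0\le\varepsilon<\delta$ gives the claim.

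To analyse $D_{\varphi,\chi,\tau}(0,\phi_0)=W+S$ I would first check that $S$, which acts only through the component $h$, is a compact operator from $X_\varphi$ into $Y_\varphi\times Y_\chi\times Y_\tau$. For the $Y_\chi$-component, $h\mapsto -m\tau_0 h/r$ is the multiplication of the $H^1(\mathbb{R}^3)$ function $h(|\cdot|)/|\cdot|$ by the bounded radial function $\tau_0$, which tends to $0$ at infinity; such a multiplication operator is compact from $H^1(\mathbb{R}^3)$ to $L^2(\mathbb{R}^3)$ by the usual cut-off at radius $R$ (Rellich--Kondrachov on $|x|\le R$, smallness of the operator norm on $|x|>R$). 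For the $Y_\tau$-component, Fubini gives $\int_0^\infty\big|\frac{16\pi m}{r^2}\int_0^r\varphi_0 h\,ds\big|\,dr\le 16\pi m\int_0^\infty\frac{|\varphi_0(s)h(s)|}{s}\,ds$, and using (\ref{eq:radialfunction}) together with the exponential decay $|u_0(s)|\le C e^{-\delta s}$ one dominates the integrand by the $h$-independent integrable function $C\|h\|_{X_\varphi}\,s^{1/2}e^{-\delta s}$; dominated convergence then turns weak convergence in $X_\varphi$ into strong convergence in $Y_\tau$. Consequently $W^{-1}S$ is compact on $X_\varphi\times X_\chi\times X_\tau$, and since $W$ is an isomorphism by Lemma~\ref{lemma:iso}, $W+S$ is Fredholm of index zero; it is therefore an isomorphism as soon as it is injective.

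The injectivity is the core of the argument. Suppose $(h,k,l)$ satisfies $W(h,k,l)+S(h)=0$. The third equation, together with $l\in X_\tau$ (hence $l(\infty)=0$), forces $l(r)=16\pi m\int_0^\infty\frac{\varphi_0(s)h(s)}{\max(r,s)}\,ds$, exactly as in the passage from (\ref{eq:systemepsilonzero}) to (\ref{eq:systemepsilonzero2}); the first equation gives $k=\frac{1}{2m}\big(\frac{h}{r}-h'\big)$; substituting both into the second equation and eliminating $k$ as in that same passage, the system collapses to
\[
-h''+2mh-2m^2\tau_0\,h-2m^2\varphi_0\,l=0 .
\]
Writing $\xi(x)=h(|x|)/|x|$ and $u_0(x)=\varphi_0(|x|)/|x|$ and inserting the expressions for $\tau_0$ and $l$ via the convolution identity recalled after (\ref{eq:choquard}), this equation is precisely $\mathcal{L}\xi=0$. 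Since $\xi\in H^1(\mathbb{R}^3)\subset L^2_{r}(\mathbb{R}^3)$ and $u_0$ is a nondegenerate solution, $\ker\mathcal{L}=\{0\}$ yields $\xi\equiv0$, hence $h\equiv0$; the first equation then gives $k\equiv0$ and the third $l\equiv0$. Thus $W+S$ is injective, hence an isomorphism, and the proof is complete.

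I expect the main obstacle to be precisely this injectivity step: one must recognise that eliminating the auxiliary unknowns $k$ and $l$ from the linearised first-order system reproduces exactly the linearisation $\mathcal{L}$ of the Choquard equation around $u_0$ --- the algebra is the same as in the derivation of (\ref{eq:systemepsilonzero2}) but carried out at the linearised level, with the extra inhomogeneous contribution coming from $l$ --- so that the known nondegeneracy of $u_0$ can be invoked. The compactness of $S$ and the identity $D(0,\phi_0)=0$ are comparatively routine, relying on the smoothness and exponential decay of the Choquard ground state and on the mapping properties already established in Lemma~\ref{lemma:regularity}.
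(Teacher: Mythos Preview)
Your proof is correct and follows essentially the same route as the paper: write $D_{\varphi,\chi,\tau}(0,\phi_0)=W+S$, use Lemma~\ref{lemma:iso} for $W$, show $S$ is compact, and reduce injectivity to $\mathcal{L}\xi=0$ via the same elimination that led from (\ref{eq:systemepsilonzero}) to (\ref{eq:systemepsilonzero2}). The only cosmetic differences are the order of the two steps and your use of the exponential decay of $u_0$ plus dominated convergence for the $Y_\tau$-compactness, where the paper instead invokes the compact embedding $H^1_r(\mathbb{R}^3)\hookrightarrow L^q(\mathbb{R}^3)$ for $2<q<6$.
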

\begin{proof}
Since $D(0,\phi_0)=0$ and $D$ is continuously differentiable in a neighborhood of $(0,\phi_0)$, to apply the implicit function theorem we have to prove that $D_{\varphi,\chi,\tau}(0,\phi_0)$ is an isomorphism of $X_\varphi\times 
X_\chi\times X_\tau$ onto $Y_\varphi\times Y_\chi\times Y_\tau$.\\ We observe that $D_{\varphi,
\chi,\tau}(0,\phi_0)(h,k,l)=0$ if and only if $(h,k,l)$ satisfies
\begin{equation}\label{eq:systemderiv}
\left\{
\begin{array}{l}
\frac{d}{dr}h-\frac{h}{r}+2mk=0\\[5pt]
\frac{d}{dr}k+\frac{k}{r}+h-m h\tau_0-m\varphi_0 l=0\\[5pt]
\frac{d}{dr}l+\frac{16\pi m}{r^2}\int_0^r\varphi_0h\,ds=0
\end{array}
\right.
\end{equation}
that means
\begin{equation}\label{eq:systemderiv2}
\left\{
\begin{array}{l}
-\frac{d^2}{dr^2}h+2m h-16\pi m^3\left(\int_{0}^{\infty}\frac{\varphi_0^2}{\max(r,s)}\,ds\right)h-32\pi m^3\left(\int_{0}^{\infty}\frac{\varphi_0 h}{\max(r,s)}\,ds\right)\varphi_0=0\\\frac{d}{dr}h-
\frac{1}{r}h+2mk=0\\[5pt]
l=16\pi m\int_0^\infty\frac{\varphi_0 h}{\max(r,s)}\,ds
\end{array}
\right.
\end{equation}
Now, if we write $\xi(x)=\frac{h(|x|)}{|x|}$ and we remind that $\varphi_0(|x|)=|x|u_0(x)$ with $u_0$ solution of (\ref{eq:choquard}), we have that $(h,k,l)$ is a solution of (\ref{eq:systemderiv2}) if
\begin{equation*}
\left(\begin{array}{c}\xi(x)\\ \zeta(x)\end{array}\right)=r^{-1}\left(\begin{array}{c}h(r)\left(\begin{array}{c}1\\0\end{array}\right)\\ ik(r)\sigma^r\left(\begin{array}{c}1\\0\end{array}\right)\end{array}\right)
\end{equation*}
satisfies
\begin{equation}\label{eq:systemderiv3d1}
\left\{
\begin{array}{l}
-\triangle \xi+2m \xi-4m^3\left(\int_{\mathbb{R}^3}\frac{\left|u_0(y)\right|^2}{|x-y|}\,dy\right)\xi-8m^3\left(\int_{\mathbb{R}^3}\frac{\xi(y)u_0(y)}{|x-y|}\,dy\right)u_0=0\\[5pt]
\zeta=\frac{-i\bar{\sigma}\nabla \xi}{2m}
\end{array}
\right.
\end{equation}
and
\begin{equation}\label{eq:systemderiv3d2}
l(x)=4m\int_{\mathbb{R}^3}\frac{\xi(y)u_0(y)}{|x-y|}\,dy.
\end{equation}
It's well known that the unique solution of the first equation of (\ref{eq:systemderiv3d1}) in $H^1_{r}(\mathbb{R}^3)$ is $\xi\equiv 0$ (see \cite{Lenzmann} for more details) and that implies $\zeta\equiv l \equiv 0$. So the 
unique solution of (\ref{eq:systemderiv}) is $h\equiv k\equiv l\equiv 0$ and $D_{\varphi,\chi,\tau}(0,\phi_0)$ is one to one in $X_\varphi\times X_\chi\times X_\tau$.

Next, if we show that $S(h)$ is a compact operator, we have that $D_{\varphi,\chi,\tau}(0,\phi_0)$ is a one to one operator that can be written as a sum of an isomorphism and a compact operator and then it's an isomorphism.\\
First, we can easily see that $T(h)=\frac{1}{r^2}\left(\int_0^r\varphi_0h\,ds\right)$ is a compact operator from $X_\varphi$ on $Y_\tau$; in particular, we use the fact that $H^1_r\left(\mathbb{R}^3\right)$ is compactly embedded in $L^q\left(\mathbb{R}^3\right)$, for $2<q<6$, to prove that for any bounded sequence $\left\{h_n\right\}\subset X_\varphi$, the sequence $\left\{T(h_n)\right\}\subset Y_\tau$ contains a Cauchy subsequence.\\
Second, we have to show that the operator $\frac{h}{r}\tau_0$ from $X_\varphi$ to $L^2\left(\mathbb{R}^3\right)$ is compact. If $\left\{\frac{h_n}{r}\right\}$ is a bounded sequence in $H^1(\mathbb{R}^3)$ then $\left\{\frac{h_n}{r}\tau_0\right\}$ is precompact on $L^2_{loc}(\mathbb{R}^3)$, thanks  to compact Sobolev embedding and, since $\tau_0(r)\rightarrow 0$ when $r\rightarrow +\infty$, we can conclude that $\left\{\frac{h_n}{r}\tau_0\right\}$ is precompact on $L^2(\mathbb{R}^3)$.\\
So $S(h)$ is a compact operator from $X_\varphi$ on $Y_\varphi\times Y_\chi\times Y_\tau$ and $D_{\varphi,\chi,\tau}(0,\phi_0)$ is an isomorphism of $X_\varphi\times X_\chi\times X_\tau$ onto $Y_\varphi\times Y_\chi\times Y_\tau$.

In conclusion, we can apply the implicit function 
theorem to find that  there exists $\delta>0$ and a function $\eta\in\mathcal{C}((0,\delta),X_\varphi\times X_\chi\times X_\tau)$ such that $\eta(0)=\phi_0$ and $D(\varepsilon,\eta(\varepsilon))=0$ for $0\leq \varepsilon <\delta$.\\
\end{proof} 


\subsection*{Acknowledgment}
The author would like to thank professor Eric S\'er\'e for helpful discussions and useful comments.



\begin{thebibliography}{9}
\bibitem{Finsmoyau}  F. Finster, J. Smoller, S.T. Yau, \textit{Particlelike solutions of the  Einstein-Dirac equations.} Physical Review. D. Particles and Fields. Third Series \textbf{59} (1999). 
\bibitem{Bird} E. J. Bird, \textit{A proof of existence of particle-like solutions of Einstein Dirac 
Equations.} Ph.D. thesis, University of Michigan, 2005.
\bibitem{Lenzmann} E. Lenzmann, \textit{Uniqueness of ground states for pseudo-relativistic Hartree equations.} Preprint (2008). 
\bibitem{liebcho} E. H. Lieb, \textit{Existence and Uniqueness of the Minimizing Solution of Choquard’s Nonlinear Equation.} Studies in Applied Mathematics \textbf{57} (1977), 93--105.
\bibitem{Lionscho} P.L. Lions, \textit{The Choquard equation and related questions.} Nonlinear Analysis. Theory, Methods and Applications \textbf{4} (1980), no. 6, 1063--1073.
\bibitem{ounaies}  H. Ounaies, \textit{Perturbation method for a class of non linear Dirac equations.} Differential Integral Equations \textbf{13} (2000), no. 4-6, 707--720. 
\bibitem{RenRog} M. Renardy, R. C. Rogers, \textit{An Introduction to Partial Differential Equations,} 337--338. Springer, 1993. 
\bibitem{thaller} B. Thaller, \textit{The Dirac Equation.} Springer, 1992.
\bibitem{Guan} M. Guan, \textit{Solitary Wave Solutions for the Nonlinear Dirac Equations.} Preprint (2008). 
\end{thebibliography}
\end{document}